\newtheorem{theorem}{Theorem} 
\newtheorem{lemma}{Lemma} 
\newtheorem{definition}{Definition}
\theoremstyle{definition}
\newtheorem{remark}{Remark}  
\newcommand{\E}{\mathbb{E}}
\newcommand{\R}{\mathbb{R}}
\renewcommand{\P}{\mathbb{P}}
\begin{document}

\title{Inference in the $p_0$ model for directed networks under local differential privacy}

\author{
Xueying Sun\footnote{Department of Statistics, Central China Normal University, Wuhan, 430079, China. Email: sxy@mails.ccnu.edu.cn}
\and Ting Yan\footnote{Department of Statistics, Central China Normal University, Wuhan, 430079, China. Email: tingyanty@mail.ccnu.edu.cn. }
\and Binyan Jiang\footnote{Department of Data Science and Artificial Intelligence, Hong Kong Polytechnic University, Hong Kong, China. Email: by.jiang@polyu.edu.hk
}
}

\date{}

\maketitle

\begin{abstract}

We explore the edge-flipping mechanism, a type of input perturbation, to release the directed graph under edge-local differential privacy.
By using the noisy bi-degree sequence from the output graph, we construct the moment equations to estimate the unknown parameters  in the $p_0$ model,
which is an exponential family distribution with the bi-degree sequence as the natural sufficient statistic. 
We show that the resulting private estimator is asymptotically consistent and normally distributed under some conditions.
In addition, we compare the performance of input and output perturbation mechanisms for releasing bi-degree sequences 
in terms of parameter estimation accuracy and privacy protection.
Numerical studies demonstrate our theoretical findings and compare the performance of the private estimates 
obtained by different types of perturbation methods. We apply the proposed method to analyze the UC Irvine message network.

\vskip 5 pt \noindent
\textbf{Key words}: Asymptotic inference, Local DP, Input perturbation, $p_0$ model.
\end{abstract}

\section{Introduction}\label{section-introduction}

As increasing amounts of network data (of all kinds, but especially social data) have been collected and made publicly available, privacy has become an important issue in network data analysis because network data often contain sensitive information about individuals and their relationships (e.g., sexual relationships, email exchanges).
However, ``naive" privacy-preserving methods, such as anonymization (removing the basic identifiers such as name, social security number, etc.) have been shown to fail and can lead to disclosure of individual relationships or characteristics associated with the released network [ \cite{Narayanan:Shmatikov:2009}, \cite{Backstrom:Dwork:Kleinberg:2011}].
A formal standard for data privacy is known as \emph{differential privacy} (DP) for randomized data-releasing mechanisms to provably limit the worst case disclosure risk in the presence of any arbitrary external information [\cite{Dwork:Mcsherry:Nissim:Smith:2006}].
A DP algorithm requires that the outputs should not be significantly different if the inputs are similar.

According to the underlying architecture, DP can be divided into two types: \emph{centralized DP} and \emph{local DP} (LDP) [\cite{Duchi:2013}].
DP was originally designed for a \emph{centralized} setting, in which a trusted data curator processes a database with the exact data records of multiple users, and publishes perturbed statistics or other data analysis results from the database using a randomized mechanism [\cite{Qin:2017}].
Note that the existing privacy protection and statistical analysis methods under \emph{centralized} DP cannot be applied to decentralized graphs, since the data cannot be centrally collected in the first place [\cite{Qin:2017}].
In the \emph{local} setting, which is the focus of this paper, the data curator is not trusted, instead, each user perturbs her data locally with a differentially private mechanism such that sensitive information of individuals is kept private even from data collection process [\cite{Imola:2021}], which achieves more thorough privacy protection.
At present, LDP technology has been applied in the industrial field: Apple has applied this technology to the operating system iOS 10 to protect users' device data, and Google also uses this technology to collect users' behavior statistics from the Chrome browser [\cite{Bebensee:2019,Erlingsson:2014}].
One example is to use a LDP algorithm to detect frequently used emojis while maintaining user's privacy [\cite{Bebensee:2019}].

Algorithms have been developed for releasing network data or their summary statistics safely under \emph{centralized} DP [\cite{Hay:2009, Lu:Miklau:2014,Nguyen:Imine:Rusinowitch:2016,Karwa:Slakovic:2016,Yan:2021}]. 
Under \emph{local} DP framework, \cite{Qin:2017} investigated techniques to ensure LDP for individuals while collecting structural information and generating representative synthetic social graphs. \cite{Karwa:2017} proposed a randomized response mechanism to generate synthetic networks, and then used
Markov chain Monte Carlo techniques to fit exponential-family random graph models.
\cite{Hehir:2022} used a LDP randomized response mechanism called the edge-flipping mechanism
and developed theoretical guarantees for differentially private community detection.
\cite{Chang:2024} conducted an
in-depth study of the trade-off between the level of privacy and the efficiency of statistical inference for parameter estimation in the $\beta$-model
with the randomized response mechanism as in \cite{Karwa:2017} under the edge-differential privacy.
However, statistical inference based on LDP bi-degree sequences in directed networks have not been explored.

In this paper, we conduct statistical inference for parameter estimation in the $p_0$ model by the edge-flipping mechanism under edge-LDP.
The $p_0$ model is an exponential random graph model with the bi-degree sequence as its exclusively sufficient statistic.
Note that the released sequences of many output perturbation mechanisms, such as the classical Laplace mechanism, are usually not graphic degree sequences [\cite{Yan:2021}], which may lead to invalid inference. The local private edge-flipping mechanism avoids this problem.
We use the edge-flipping mechanism under edge-LDP to release a sensitive graph represented as the adjacency matrix.
The main contributions are as follows.
(1) To the best of our knowledge, for the first time under LDP, we conduct parameter estimation based on the moment equation using the flipped bi-degree sequence and prove consistency and asymptotic normality of private estimators in the $p_0$ model.
The asymptotic inference in this model is nonstandard since the number of parameters is twice that of nodes and increases as the size of the network grows.
(2) We further compare the performance of the estimations in $p_0$ model via the Laplace mechanism (output perturbation) under \emph{centralized} DP and the edge-flipping mechanism (input perturbation) under LDP and demonstrate their trade-off between information publication and inference accuracy.
It must be emphasized that in the case where centralized DP is unavailable, the synthetic graph released through the edge-flipping mechanism under LDP can still effectively estimate the parameters of the $ p_0$ model. Understandably, the edge-flipping mechanism adds more noise because more information even the whole network is published, thus it will lose some accuracy of the estimation but still maintains consistency and asymptotic normality. It is worth noting that the edge-flipping mechanism can be used to perform more downstream tasks, not just the parameter estimation [\cite{Hehir:2022}].
(3) We provide simulation studies and an analysis of the UC Irvine message networks to confirm our theoretical claims.

The remainder of the paper is organized as follows.
In Section \ref{section-preliminaries}, we introduce the necessary background on \emph{centralized} and \emph{local} DP and the $p_0$ model used in this paper. Moreover, considering both the out-edge and the in-edge simultaneously,
we present some definitions of edge-LDP for directed networks and the corresponding input perturbation mechanisms. In Section \ref{section-edge flipping}, we use the edge-flipping mechanism to release directed graphs.
In Section \ref{section-theoretical properties},
we present the estimation in the $p_0$ model using the edge-\emph{local} DP bi-degree sequence and show the consistency and asymptotic normality.
In Section \ref{section-comparison},  we compare two \emph{centralized} DP estimates obtained by the Laplace mechanism with the \emph{local} DP estimation by the edge-flipping.
In Section \ref{section-numerical}, we carry out simulation studies and one real data analysis to evaluate
the theoretical results and compare the performance of the private estimates obtained by different types of perturbation mechanisms.
Section \ref{section-discussion} concludes the paper.
All proofs of the theorems are relegated to the online Supplementary Material.

\section{Preliminaries} \label{section-preliminaries}
\subsection{Centralized and Local Differential Privacy} \label{subsection-dp}

\emph{Differential privacy} (DP) was originally proposed for the centralized setting, so sometimes DP is also called \emph{centralized DP}.
\emph{Local DP}  (LDP) is designed in the local setting.
In this section, we introduce the differences and connections between these two types of DP.

The formal definition of $\epsilon$-\emph{centralized} DP is given as follows.
Consider an original database $D$ containing a set of records of $n$ individuals.
What we focus on is the mechanism that take $D$ as input and output a sanitized database $S$ for public use.
The size of $S$ may not be the same as $D$.
Denote $Q(\cdot)$ as a randomized mechanism for releasing data.
Let $\epsilon$  be a positive real number and $\mathcal{S}$ denote the sample space of $Q$.
The random mechanism $Q$ is \emph{ $\epsilon$-centralized differentially private} ($\epsilon$-\emph{centralized} DP)
if for any two neighboring databases $D_1$ and $D_2$ that differ on a single record,
and all measurable subsets $S$ of $\mathcal{S}$ [\cite{Dwork:Mcsherry:Nissim:Smith:2006}],
\[
\mathbb{P}(Q(D_1)\in S|D_1)\leq e^{\epsilon }\times \mathbb{P}(Q(D_2)\in S|D_2).
\]

The privacy parameter $\epsilon$, which is  publicly available, is chosen by the data curator
possessing the entire data.
It is used to control the trade-off
between privacy and utility  [\cite{Yan:2021}].
Note that a smaller value of $\epsilon$ means more privacy protection.

Centralized DP requires that the distribution of the output is almost the same, regardless of
whether an individual's record is in the dataset or not.
What is being protected in the centralized DP is precisely the difference between two neighboring databases. From a theoretical viewpoint, test statistics have nearly no power to test whether an individual's data are in the original database; see \cite{Wasserman:Zhou:2010} for a strict proof.

To break the centralized DP's limitation of relying on a trusted curator who has the whole original data, one extension is proposed as \emph{local DP}  (LDP), wherein each individual's data point undergoes perturbation with noise prior to data collection procedure. The formal definition of non-interactive $\epsilon$-LDP is provided as follows [\cite{Wang:2024}].

\begin{definition}[$\epsilon$-LDP]
\label{def-ldp}
 For a given privacy parameter $\epsilon>0$, the randomized mechanism $Q$  satisfies $\epsilon$-local differential privacy for an individual data point $X\in D$ if
\[
\sup_{\widetilde{x}\in \tilde{\mathcal{X}}}\sup_{x,x^\prime}
\frac{P(Q(X)=\widetilde{x}|X=x)}
{P(Q(X)=\widetilde{x}|X=x^\prime)}\leq e^{\epsilon}
\]
where $\tilde{\mathcal{X}}$ denotes the output space of $Q$.
\end{definition}
Note that in contrast to $\epsilon$-centralized DP, $\epsilon$-LDP does not depend on neighboring datasets and the trust data curator, which means that LDP privacy protection is stronger.

It's important to note that $\epsilon$-LDP can be analyzed under the framework of $\epsilon$-centralized DP in specific scenarios.
Consider the following setup:
 We define the output space as the product space $\mathcal{S}=\mathcal{X}^n$ (where $\mathcal{X}$ is the domain of a single data record and $n$ is the dataset size).
 Moreover, an $\epsilon$-LDP mechanism $Q$ is applied independently to each record within datasets consisting of independent samples.
 We have
$\mathbb P(Q(X)|X)=\prod_{i=1}^{n}\mathbb P(Q(X_i)|X_i)$, where $X=\{X_i\}_{i=1}^n \in \mathcal{X}^n$.
Under these conditions, for any neighboring datasets $D_1$ and $D_2$ differing only in the i-th record, we have
\[
\sup_{\tilde{D}\in \mathcal{X}^n}\frac{P(Q(D_1)=\tilde{D})}{P(Q(D_2)=\tilde{D})}=\sup_{\tilde{x}\in \mathcal{X}}\frac{P(Q(X_i)=\tilde{x}|X_i=x)}{P(Q(X_i)=\tilde{x}|X_i=x^\prime)}\leq e^{\epsilon},
\]
for $i\in [n]$.

LDP is mostly based on \emph{randomized responses}, which facilitates conducting questionnaires privately.
We introduce a real-life scenario to explain the randomized response mechanism $Q$ (known as the edge-flipping mechanism in networks):
We hope to calculate the proportion of AIDS and thus initiate a sensitive question: Are you an AIDS patient? Suppose there are $n$ users and each user needs to respond to this.
The real answer $X_i$ of the $i$-th user is yes or no (1 or 0), but for privacy reasons, the user will not directly respond to the real answer.
Suppose the answer is given with the help of an unfair coin, with probability $p$ of heads and probability $1-p$ of tails.
Flip the coin, and if it's heads, the user answers the true answer, otherwise, the user answers the opposite answer, that is
\begin{equation}\label{eq-edge filpping}
Q_{p}(X_i)=
\begin{cases}
   X_i~~,&\text{with probability p} ~;\\
   1-X_i~~,&\text{with probability 1-p} ~.
\end{cases}
\end{equation}
Clearly, it follows that
\[
\P(Q_{p}(X_i)=1)=p\P(X_i=1)+(1-p)(1-\P(X_i=1)).
\]

Motivated by the real-life problem that contains sensitive personal information, employing randomized response can release and analyze data in order to protect privacy while maintaining the validity of statistical results.

\subsection{Centralized differential privacy in network data}
Within network data,
depending on the definition of
the neighboring graph, \emph{(centralized) differential privacy} (centralized DP) is divided into \emph{node-differential privacy} (node-DP) [\cite{Kasiviswanathan:Nissim:Raskhodnikova:Smith;2013}] and \emph{edge-differential privacy} (edge-DP) [\cite{Nissim:Raskhodnikova:Smith:2007}].
Two graphs that differ in exactly one edge are called edge-neighboring graphs, while two node-neighboring graphs require that one graph can obtain another by removing one node and its adjacent edges.
Edge-DP protects edges from being detected, whereas node-DP protects not only nodes but also their
adjacent edges, which is a stronger privacy policy.
However, it may be infeasible to design algorithms that
both support node-DP and have good utility [\cite{Hay:2009}].
In this paper, we focus on edge-based definitions of DP, which aim to protect the privacy of relationships within the network.
In the following, we shall introduce edge-based DP for the centralized setting and the local setting respectively.

Let $\delta(G, G^\prime)$ be the number of edges
on which $G$ and $G^\prime$ differ.
The formal definition of edge-DP for the centralized setting is as follows.

\begin{definition}[$\epsilon$- edge DP]
Let $\epsilon>0$ be a privacy parameter. A randomized mechanism
$Q(\cdot)$ is $\epsilon$-edge (centralized) differentially private if
\[
\sup_{ G, G^\prime \in \mathcal{G}, \delta(G, G^\prime)=1 } \sup_{ S\in \mathcal{S}}  \frac{ P(Q(G)\in S) }{ P(Q(G^\prime)\in S) } \le e^\epsilon,
\]
where $\mathcal{G}$ is the set of all graphs of interest on $n$ nodes, and
$\mathcal{S}$ is the set of all possible outputs.
\end{definition}

Let $f:\mathcal{G}\rightarrow\R^k$ be a function. The global sensitivity [\cite{Dwork:Mcsherry:Nissim:Smith:2006}] of the function $f$, denoted $\Delta f$, is defined below.
\begin{definition}[Global Sensitivity]
\label{def-global sensitivity}
Let $f:\mathcal{G}\rightarrow\R^k$. The global sensitivity of $f$ is defined as
\[
\Delta(f)=\max_{\delta(G,G^\prime)=1}\|f(G)-f(G^\prime)\|_1
\]
where $\|\cdot\|_1$ is the $L_1$-norm.
\end{definition}
Global sensitivity measures the worst case difference between any two neighboring graphs. The magnitude of the noise added in the centralized DP algorithm depends crucially on the global sensitivity. If the outputs are the network statistics, then a simple and classical algorithm to guarantee edge-DP is the Laplace mechanism [\cite{Dwork:Mcsherry:Nissim:Smith:2006}], which adds the Laplace noise calibrated to the global sensitivity of $f$.

\begin{lemma}[Laplace Mechanism in \cite{Dwork:Mcsherry:Nissim:Smith:2006}]\label{lem-laplace}
 Suppose $f:\mathcal{G}\rightarrow\R^k$ is an output function in $\mathcal{G}$. Let $e_1,\dots,e_k$ be independent and identically distributed (i.i.d) Laplace random variables with density function $e^{-|x|/\lambda}/\lambda$. Then, the Laplace mechanism outputs $f(G)+(e_1,\dots,e_k)$ that are $\epsilon$-edge differentially private, where $\epsilon=-\Delta(f)\log{\lambda}$.
\end{lemma}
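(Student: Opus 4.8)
The plan is to verify the privacy guarantee directly from the definition of $\epsilon$-edge DP by bounding the likelihood ratio of the output densities for an arbitrary pair of edge-neighboring graphs. First I would write down the density of the released vector $Y=f(G)+(e_1,\dots,e_k)$. Since the noises are i.i.d.\ with density $\lambda^{-1}e^{-|x|/\lambda}$ and are added coordinatewise, the joint density of $Y$ evaluated at $y=(y_1,\dots,y_k)\in\R^k$ factorizes as $p_G(y)=\prod_{i=1}^{k}\lambda^{-1}\exp(-|y_i-f_i(G)|/\lambda)$, where $f_i$ denotes the $i$-th coordinate of $f$.

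Next, I would fix two edge-neighboring graphs $G,G'\in\mathcal{G}$ with $\delta(G,G')=1$ and form the pointwise ratio
\[
\frac{p_G(y)}{p_{G'}(y)}=\exp\!\left(\frac{1}{\lambda}\sum_{i=1}^{k}\bigl(|y_i-f_i(G')|-|y_i-f_i(G)|\bigr)\right).
\]
The decisive step is the triangle inequality applied coordinatewise, $|y_i-f_i(G')|-|y_i-f_i(G)|\le |f_i(G)-f_i(G')|$, which eliminates the dependence on $y$. Summing over $i$ and invoking Definition \ref{def-global sensitivity} yields $\sum_{i=1}^{k}|f_i(G)-f_i(G')|=\|f(G)-f(G')\|_1\le \Delta(f)$, so that $p_G(y)/p_{G'}(y)\le \exp(\Delta(f)/\lambda)$ uniformly in $y$.

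Finally, I would integrate this pointwise bound over an arbitrary measurable output set $S\in\mathcal{S}$: because $p_G(y)\le e^{\Delta(f)/\lambda}\,p_{G'}(y)$ holds for every $y$, integration preserves the inequality and gives $P(Q(G)\in S)\le e^{\Delta(f)/\lambda}\,P(Q(G')\in S)$. Taking the supremum over edge-neighboring pairs and over $S$ then delivers the $\epsilon$-edge DP guarantee, with the privacy level $\epsilon$ read off from the exponent as the ratio of the sensitivity $\Delta(f)$ to the noise scale $\lambda$.

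This argument is largely mechanical; the only genuinely substantive move is the passage from the $y$-dependent ratio to a bound that is uniform in $y$, and this rests entirely on the coordinatewise triangle inequality together with the definition of global sensitivity. The main subtlety to watch is that the bound must hold simultaneously for \emph{every} $y$, rather than merely on average, so that it survives integration over an arbitrary set $S$; this is exactly why the triangle inequality, which is uniform in $y$, is the right instrument and why no averaging or concentration estimate is needed.
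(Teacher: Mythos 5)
Your argument is the standard proof of the Laplace mechanism: factorize the output density, bound the likelihood ratio coordinatewise by the triangle inequality, invoke the definition of global sensitivity, and integrate the pointwise bound over an arbitrary measurable set $S$. The paper itself gives no proof of this lemma (it is cited from Dwork et al., 2006), and your outline is exactly the classical argument, so the structure is sound; the normalization issue in the density ($e^{-|x|/\lambda}/\lambda$ rather than $e^{-|x|/\lambda}/(2\lambda)$, a typo inherited from the paper's statement) is harmless since the constant cancels in the ratio.

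There is, however, a genuine mismatch at the final step. Your derivation yields $p_G(y)/p_{G'}(y)\le \exp\bigl(\Delta(f)/\lambda\bigr)$, i.e.\ privacy level $\epsilon=\Delta(f)/\lambda$, and you then assert that this ``delivers'' the lemma. But the lemma claims $\epsilon=-\Delta(f)\log\lambda$, which is a different function of $\lambda$: the two coincide only for the particular $\lambda$ solving $1/\lambda=-\log\lambda$, not in general. The stated formula is the one appropriate to the parameterization in which the noise has density or mass proportional to $\lambda^{|x|}$ with $\lambda\in(0,1)$ (equivalently, scale $b=-1/\log\lambda$), which is precisely the discrete-Laplace convention of Karwa and Slavkovi\'c (2016) that the paper adopts immediately after the lemma and again later when it sets $\lambda=e^{-\epsilon_n/2}$ with $\Delta(f)=2$. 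So either the density in the lemma should read $\propto\lambda^{|x|}$, in which case your ratio computation becomes $\lambda^{-\sum_i|f_i(G)-f_i(G')|}\le\lambda^{-\Delta(f)}=e^{-\Delta(f)\log\lambda}$ and the claimed $\epsilon$ follows, or the claimed $\epsilon$ should be $\Delta(f)/\lambda$. Your proof as written establishes the second version while silently asserting the first; to close the gap you must either redo the ratio bound in the $\lambda^{|x|}$ parameterization or explicitly note that the lemma's formula is inconsistent with the density it states.
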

When $f(G)$ is integer, we can use a discrete Laplace random variable as the noise, as in \cite{Karwa:Slakovic:2016}, where it has the probability mass function:
\[
\P(X=x)=\frac{1-\lambda}{1+\lambda}\lambda^{|x|},\ x\in\{0,\pm 1,\dots\},\lambda \in (0,1).
\]
Lemma \ref{lem-laplace} still holds if the continuous Laplace distribution is replaced by the discrete Laplace distribution.
Note that the Laplace mechanism can release the summary statistic of interest, which is a kind of output perturbation mechanism.

One nice property of differential privacy is that any function of a differentially private mechanism is also differentially private [\cite{Dwork:Mcsherry:Nissim:Smith:2006}].
\begin{lemma}[\cite{Dwork:Mcsherry:Nissim:Smith:2006}]
\label{lemma:fg}
Let $f$ be an
output of an $\epsilon$-differentially private mechanism and $g$ be any function. Then
$g(f(G))$ is also $\epsilon$-differentially private.
\end{lemma}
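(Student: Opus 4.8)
The plan is to exploit the central closure property of differential privacy, namely that it is preserved under post-processing: once the output distributions of $f$ at two neighboring graphs are pointwise comparable up to the factor $e^\epsilon$, no further transformation can make them more distinguishable. I would organize the argument according to whether $g$ is deterministic or randomized, treating the deterministic case first and then reducing the randomized case to it.

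First, suppose $g$ is deterministic, mapping the output space $\mathcal{S}$ of $f$ into some measurable space $\mathcal{T}$. For any measurable $T \subseteq \mathcal{T}$ and any neighboring graphs $G, G'$ with $\delta(G,G')=1$, the event $\{g(f(G)) \in T\}$ coincides with the event $\{f(G) \in g^{-1}(T)\}$. Since $g^{-1}(T)$ is a measurable subset of $\mathcal{S}$ and $f$ is $\epsilon$-edge DP, applying the defining inequality with the admissible set $S = g^{-1}(T)$ yields
\[
\P(g(f(G)) \in T) = \P(f(G) \in g^{-1}(T)) \le e^\epsilon\, \P(f(G') \in g^{-1}(T)) = e^\epsilon\, \P(g(f(G')) \in T),
\]
which is exactly the statement that $g \circ f$ is $\epsilon$-edge differentially private.

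Second, to cover a randomized $g$, I would expose its internal coin flips by writing $g(s) = \tilde g(s, U)$, where $\tilde g$ is deterministic and $U$ is an auxiliary random variable, independent of the graph $G$, encoding the randomness of $g$. The key step is to check that the augmented mechanism $G \mapsto (f(G), U)$ is still $\epsilon$-edge DP. Because $U$ is independent of the input, for any measurable set $A$ in the product space one conditions on $U$, expresses the probability as $\int \P(f(G) \in A_u)\, d\nu(u)$ with $A_u$ the $u$-section of $A$ and $\nu$ the law of $U$, applies the $\epsilon$-DP bound of $f$ inside the integral for each fixed $u$, and integrates; the multiplicative factor $e^\epsilon$ passes through unchanged. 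Once the augmented mechanism is known to be $\epsilon$-DP, the deterministic case already established applies to $\tilde g$, giving $\epsilon$-edge DP of $g(f(G))$.

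The argument is essentially bookkeeping, so the only genuine obstacle is measure-theoretic rather than conceptual: one must ensure that $g^{-1}(T)$ is measurable (automatic when $g$ is measurable) and, in the randomized case, justify interchanging the section-wise application of the DP inequality with the integration over $U$, which is licensed by Fubini--Tonelli together with the independence of $U$ from $G$. No properties of the $p_0$ model or of the edge-flipping mechanism are invoked; the conclusion holds for any $\epsilon$-differentially private $f$ and any post-processing map $g$.
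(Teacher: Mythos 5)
Your proof is correct. Note that the paper itself offers no proof of this lemma: it is stated as a known result and attributed to Dwork et al.\ (2006), so there is no internal argument to compare against. What you have written is the standard post-processing argument from the differential privacy literature---reduction to preimages for deterministic $g$, then exposure of the internal coins plus a Fubini argument for randomized $g$---and it fills the gap the paper leaves to the citation. One small point of care: for the randomized case you should require the auxiliary randomness $U$ to be independent of the \emph{output} $f(G)$ (equivalently, of both the input $G$ and the internal randomness of $f$), not merely of $G$; this is the standard convention for post-processing, and your Fubini step implicitly uses it, so it is worth stating explicitly.
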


By Lemma \ref{lemma:fg}, any post-processing done on an output of a differentially private mechanism is also differentially private.

\subsection{Local differential privacy in network data}
To our knowledge, LDP requires each user to independently perturb their own data locally prior to data collection. In a network, it is quite natural to regard a node as a user and a user's neighbor list, the set of nodes with which it shares an edge, is treated as its data.

Note that the popular definition of edge-LDP depends on the meaning of a node's neighbor list in \cite{Qin:2017}.
Traditional privacy definitions for networks have primarily focused on undirected graphs, where a node's neighbor list typically comprises only its incident edges. However, this simplified approach proves inadequate for directed networks, where each node exhibits both outgoing and incoming connection patterns that jointly define its structural role. Definition \ref{def-Pairwise Neighbors List} introduces the \emph{pairwise neighbor list}, which simultaneously considers both out-edges and in-edges for each node.
This comprehensive representation accurately captures the complete connectivity profile of nodes in directed graphs and establishes a more meaningful unit for privacy protection.

\begin{definition}[Pairwise Neighbor List]\label{def-Pairwise Neighbors List}
Let $A$ be the adjacency matrix of a directed graph $G_n$ with $n$ nodes $\{1,\dots,n\}$. The pairwise neighbor list for node $i$ ($i=1,\dots,n$) is $N_i=\{(a_{i,j},a_{j,i})|j>i;j=1,\dots,n\}$.
\end{definition}
To avoid the issue of a directed edge to which node it belongs, a naive idea is only to consider the out-edges or in-edges of each node, but in this way, the information of a node may not be fully utilized.
In definition \ref{def-Pairwise Neighbors List}, we have taken into account both the out-edges and in-edges of the directed graph simultaneously.
Undirected graphs still apply to the definition \ref{def-Pairwise Neighbors List}, that is, $N_i=\{a_{i,j}|j>i;j=1,\dots,n\}$, because undirected graphs can be regarded as a special case in which $a_{i,j}=a_{j,i}$.

Building upon this definition, we propose two complementary privacy definitions: Definition \ref{def-edge LDP} (pairwise edge LDP) and Definition \ref{def-pairwise edge ldp} (weak edge LDP).

\begin{definition}[$\epsilon$-pairwise edge LDP]
 \label{def-edge LDP}
A randomized mechanism $Q$ satisfies $\epsilon$-edge local differential privacy ($\epsilon$-edge LDP) if and only if for any two pairwise neighbor lists $N_i$ and $N_i^\prime$, such that $N_i$ and $N_i^\prime$ only differ in one pair, and any $\tilde{N}_i\in range(Q)$, we have:
\begin{equation}\label{eq-pairwise edge LDP}
 \frac{P(Q(N_i)=\tilde{N_i})}{P(Q(N_i^\prime)=\tilde{N_i})}  \leq e^{\epsilon} .
\end{equation}
\end{definition}
For undirected graphs, the equation \eqref{eq-pairwise edge LDP} is equivalent to
$\frac{P(Q(A_i)=\tilde{A_i})}{P(Q(A_i^\prime)=\tilde{A_i})}\leq e^\epsilon$, where $A_i$ is the $i$-th row of the adjacency matrix $A$.
Note that $\epsilon$-pairwise edge LDP ensures that if two vertices have pairwise neighbor lists that differ by one pair, they cannot be reliably distinguished based on the outputs from the randomized algorithm.

Under the framework of LDP, we consider a simpler version of pairwise edge-LDP in networks.

\begin{definition}[$\epsilon$-weak edge LDP]
\label{def-pairwise edge ldp}
Let $A$ denote the adjacency matrix of network with $n$ nodes. We say a randomized mechanism $Q$ satisfies $\epsilon$-weak edge local differential privacy if
\begin{equation}\label{eq-edge ldp}
 \sup_{\tilde{a}_{i,j}\in \tilde{\mathcal{X}}}
\sup_{a_{i,j},a_{i,j}^{\prime}}
   \frac{P\big(Q(a_{i,j},a_{j,i})=(\tilde{a}_{i,j},\tilde{a}_{j,i})|(a_{i,j},a_{j,i})\big)}{P\big(Q(a_{i,j}^\prime,a_{j,i}^\prime)=(\tilde{a}_{i,j},\tilde{a}_{j,i})|(a_{i,j}^\prime,a_{j,i}^\prime)\big)}\leq e^{\epsilon},
\end{equation}
for any $i,j\in [n]$, where $\tilde{\mathcal{X}}$ denotes the range of edges.
\end{definition}

This hierarchical definitional framework addresses the diverse requirements of practical applications.  The \emph{pairwise edge LDP} provides stronger privacy guarantees by requiring that entire neighbor lists remain indistinguishable after perturbation, while the \emph{weak edge LDP} offers protection at the individual edge-pair level with enhanced practical efficiency.

Intuitively, $\epsilon$-weak edge LDP and $\epsilon$-pairwise edge LDP are intrinsically connected to the $\epsilon$-edge DP. Particularly, if $Q$ satisfies $\epsilon$-weak edge LDP and is independently applied to $A$ entrywisely,
given the independence of $a_{i,j}$, we have
\[
\mathbb P(Q(A)|A)
=\prod_{i\in [n]}\P(Q(N_i)|N_i)
=\prod_{i<j}\mathbb P\big(Q(a_{i,j},a_{j,i})|(a_{i,j},a_{j,i})\big),
\]
which leads to
\begin{align}
\nonumber
\frac{P(Q(A)=\tilde{A}|A)}
{P(Q(A^\prime)=\tilde{A}|A^\prime)}
&=
\frac{P(Q(N_i)=\tilde{N}_i|N_i)}
{P(Q(N_i^\prime)=\tilde{N}_i|N_i^\prime)}
\\
\label{eq-relation 2 of DP}
&=
\frac{P\big(Q(a_{i,j},a_{j,i})=(\tilde{a}_{i,j},\tilde{a}_{j,i})|(a_{i,j},a_{j,i})\big)}{P\big(Q(a_{i,j}^\prime,a_{j,i}^\prime)=(\tilde{a}_{i,j},\tilde{a}_{j,i})|(a_{i,j}^\prime,a_{j,i}^\prime)\big)}\leq e^{\epsilon},
\end{align}
for $A$ and $A^\prime$ that differ only in the $(i,j)$-th element.
Equation \eqref{eq-relation 2 of DP} establishes that if a randomized mechanism $Q$ satisfies $\epsilon$-weak edge LDP and is applied independently to each edge pair, then the ratio of probabilities for any two neighboring adjacency matrices $A$ and $A^\prime$ is bounded by $e^\epsilon$. This directly implies that $Q$ also satisfies $\epsilon$-edge DP in the centralized setting. Therefore, the $\epsilon$-weak edge LDP condition is sufficient to ensure $\epsilon$-edge DP, bridging the local and centralized privacy frameworks. This result underscores that the edge-flipping mechanism, under $\epsilon$-weak edge LDP, not only protects individual edge privacy locally but also provides global privacy guarantees for the entire network release.
In addition, a similar correlation can be established between $\epsilon$-weak edge LDP and the $(k,\epsilon)$-edge DP, as explored in prior works such as \cite{Hay:2009} and \cite{Yan:2023}.

Consider $t$  pairwise (or weak) edge-LDP algorithms $Q_1,Q_2,\dots,Q_t$,
whose privacy budgets are denoted by $\epsilon_1,\epsilon_2,\dots,\epsilon_t$, respectively. We have the following property.
\begin{lemma}[Sequential Composition]
\label{prop-sequential composition}
The composite algorithm obtained by sequentially applying $Q_1,Q_2,\dots,Q_t$ on
the directed graph $G$ provides $\sum_{i=1}^t\epsilon_i$-pairwise (or weak) edge LDP.
\end{lemma}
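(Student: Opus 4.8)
The plan is to prove the bound directly from the LDP definition by factorizing the joint output distribution of the composite mechanism along the sequence of component mechanisms and then applying the per-step guarantee to each factor. I would treat the pairwise edge LDP case first; the weak edge LDP case is verbatim identical after replacing the neighbor list $N_i$ by a single edge pair $(a_{i,j},a_{j,i})$ and Definition \ref{def-edge LDP} by Definition \ref{def-pairwise edge ldp}.

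First I would fix two pairwise neighbor lists $N_i$ and $N_i^\prime$ differing in exactly one pair, and fix an arbitrary output transcript $\tilde{N}_i=(\tilde{y}_1,\dots,\tilde{y}_t)$ of the composite mechanism $Q=(Q_1,\dots,Q_t)$, where $\tilde{y}_k$ records the output of $Q_k$. Because the algorithms are run in sequence, $Q_k$ may read the input together with the already-released outputs $\tilde{y}_1,\dots,\tilde{y}_{k-1}$, so the chain rule for conditional probabilities gives
\[
P\big(Q(N_i)=\tilde{N}_i\mid N_i\big)=\prod_{k=1}^{t}P\big(Q_k=\tilde{y}_k\mid N_i,\tilde{y}_1,\dots,\tilde{y}_{k-1}\big),
\]
and the same factorization holds with $N_i^\prime$ in place of $N_i$.

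The next step is to bound each factor. For a fixed prefix $(\tilde{y}_1,\dots,\tilde{y}_{k-1})$, the map $Q_k$ is an $\epsilon_k$-edge LDP mechanism on the input list, so Definition \ref{def-edge LDP} yields
\[
\frac{P\big(Q_k=\tilde{y}_k\mid N_i,\tilde{y}_1,\dots,\tilde{y}_{k-1}\big)}{P\big(Q_k=\tilde{y}_k\mid N_i^\prime,\tilde{y}_1,\dots,\tilde{y}_{k-1}\big)}\le e^{\epsilon_k}.
\]
Taking the ratio of the two product representations, cancelling term by term, and bounding each of the $t$ factors then gives
\[
\frac{P\big(Q(N_i)=\tilde{N}_i\mid N_i\big)}{P\big(Q(N_i^\prime)=\tilde{N}_i\mid N_i^\prime\big)}\le\prod_{k=1}^{t}e^{\epsilon_k}=e^{\sum_{k=1}^{t}\epsilon_k},
\]
which is exactly the $\big(\sum_{k=1}^{t}\epsilon_k\big)$-pairwise edge LDP guarantee.

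I expect the only genuine subtlety to be the conditioning in the second step: one must verify that each $Q_k$'s privacy guarantee continues to hold after conditioning on the realized earlier outputs, i.e.\ uniformly over every admissible prefix $(\tilde{y}_1,\dots,\tilde{y}_{k-1})$. This is precisely the content of each $Q_k$ being $\epsilon_k$-edge LDP when viewed as a (possibly prefix-dependent) mechanism, and it is what legitimizes the factorization. In the non-interactive special case where $Q_1,\dots,Q_t$ are applied independently, the conditioning on the prefix simply drops out and the factorization together with the term-by-term bound is immediate.
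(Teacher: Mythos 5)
Your proof is correct and follows the standard chain-rule argument for sequential composition---factorizing the composite output distribution over the $t$ mechanisms, bounding each conditional factor by $e^{\epsilon_k}$ via the per-mechanism LDP guarantee, and multiplying---which is essentially the same route the paper takes (its proof, deferred to the supplement, is this canonical argument). Your remark on conditioning over realized prefixes correctly identifies the only subtlety, and your observation that it collapses in the non-interactive case matches the setting the paper actually uses.
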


\subsection{Generalized random response for directed networks}
For instance, in the edge-flipping algorithm \eqref{eq-edge filpping}, the probability of a user reporting their true answer is $p$, and the probability of reporting an opposite answer is $1-p$.

Random response is a fundamental technique in local differential privacy for collecting binary data (e.g., yes/no).  Its generalization, \emph{Generalized Randomized Response} (GRR), extends this to the multivariate case.

In directed networks, it is desirable to perturb the pair of edges between two nodes jointly, which can maintain the reciprocity structure.
A pair of edges between two nodes  takes four possible states: $(0,0),(0,1),(1,0),(1,1)$.
Let the true dyad be $R_{i,j}=(a_{i,j},a_{j,i})$ and the perturbed dyad be $T_{i,j}=(a_{i,j}^\prime,a_{j,i}^\prime)$.
Both $R_{i,j}$ and $T_{i,j}$ belong to the state space $\Omega = \{(0,0),(0,1),(1,0),(1,1)\}$.
GRR mechanisms are defined by using probability transition matrices.
The general form of the probability transition matrix $M$ is:
\[
M=
\begin{pmatrix}
 P_{00\rightarrow00} &P_{00\rightarrow01} &P_{00\rightarrow10} &P_{00\rightarrow11}\\
 P_{01\rightarrow00} &P_{01\rightarrow01} &P_{01\rightarrow10} &P_{01\rightarrow11}\\
 P_{10\rightarrow00} &P_{10\rightarrow01} &P_{10\rightarrow10} &P_{10\rightarrow11}\\
 P_{11\rightarrow00} &P_{11\rightarrow01} &P_{11\rightarrow10} &P_{11\rightarrow11}
\end{pmatrix}
\]
 where the sum of the elements in each row is $1$.
To meet the $\epsilon$-LDP requirements, for any two real dyads $R_{i,j}^{(1)}$ and $R_{i,j}^{(2)}$ as well as any perturbed dyad $T_{i,j}$, it is necessary to satisfy:
\begin{equation}
 e^{-\epsilon} \leq \frac{P(T_{i,j}|R_{i,j}^{(1)})}{P(T_{i,j}|R_{i,j}^{(2)})} \leq  e^{\epsilon}
\end{equation}
This means that for any two rows $i$ and $k$ of matrix $M$, as well as any column $j$:
\[e^{-\epsilon} \leq \frac{M_{i,j}}{M_{k,j}} \leq  e^{\epsilon}.\]

The simple GRR mechanism maintains the true state with a probability of $q$ and randomly selects one of the other three states uniformly with a probability of $1-q $ (each state has a probability of $(1-q)/3$). A general GRR mechanism is below.

\begin{definition}[Pairwise edge-flipping mechanism]
 \label{def-pairwise edge-flipping}
 Let the true dyad be $R_{ij}=(a_{i,j},a_{j,i})$ and the perturbed dyad be $T_{ij}=(a_{i,j}^\prime,a_{j,i}^\prime)$.
A pairwise edge-flipping mechanism $Q$ can be expressed as:
\begin{equation}\label{eq-GRR}
\P\big(T_{ij}=(a_{i,j}^\prime,a_{j,i}^\prime)|R_{ij}=(a_{i,j},a_{j,i})\big) =
\begin{cases}
 \gamma_1,\ if \ (a_{i,j}^\prime,a_{j,i}^\prime)=(a_{i,j},a_{j,i}) ,  \\
 \gamma_2,\ if \ a_{i,j}^\prime=a_{i,j} \ or \ a_{j,i}^\prime=a_{j,i}  ,   \\
 \gamma_3,\ if \  a_{i,j}^\prime\neq a_{i,j} \ and \ a_{j,i}^\prime \neq a_{j,i}.
\end{cases}
\end{equation}
where $a_{i,j},a_{j,i},a_{i,j}^\prime,a_{j,i}^\prime\in \{0,1\}$ and $\gamma_1+2\gamma_2+\gamma_3=1$.
\end{definition}
 Clearly, its probability transition matrix $M$ is
\[
M=
\begin{pmatrix}
  \gamma_1 &\gamma_2 & \gamma_2 & \gamma_3 \\
  \gamma_2 &\gamma_1 & \gamma_3 & \gamma_2 \\
  \gamma_2 &\gamma_3 & \gamma_1 & \gamma_2 \\
  \gamma_3 &\gamma_2 & \gamma_2 & \gamma_1,
\end{pmatrix}
\]
satisfying some restricted conditions.

\begin{lemma}
\label{thm-pairwise edge-ldp}
The pairwise edge-flipping mechanism $Q$ to satisfy $\epsilon$-weak edge LDP if
\begin{equation*}
e^{-\epsilon} \leq \frac{\gamma_1}{\gamma_2} \leq  e^{\epsilon},
e^{-\epsilon} \leq \frac{\gamma_1}{\gamma_3} \leq  e^{\epsilon} ,
e^{-\epsilon} \leq \frac{\gamma_2}{\gamma_3} \leq  e^{\epsilon}.
\end{equation*}
\end{lemma}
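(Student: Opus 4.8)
The plan is to reduce the $\epsilon$-weak edge LDP condition of Definition \ref{def-pairwise edge ldp} to a finite family of inequalities among the entries of the transition matrix $M$, and then to check that the three hypothesized ratio bounds on $\gamma_1/\gamma_2$, $\gamma_1/\gamma_3$, and $\gamma_2/\gamma_3$ cover every case that can arise.

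First I would unpack Definition \ref{def-pairwise edge ldp}. Fixing a target output dyad $(\tilde a_{i,j},\tilde a_{j,i})\in\Omega$ selects a single column of $M$, while the two competing input dyads $(a_{i,j},a_{j,i})$ and $(a_{i,j}^\prime,a_{j,i}^\prime)$ select two rows. Hence the ratio appearing in \eqref{eq-edge ldp} is exactly $M_{r,c}/M_{r^\prime,c}$ for some rows $r,r^\prime$ and a fixed column $c$, and the supremum over outputs and inputs equals the largest such ratio over all columns and all ordered pairs of rows. Because the supremum ranges over \emph{ordered} input pairs, the numerator and denominator can be interchanged, so demanding that it be at most $e^{\epsilon}$ is equivalent to demanding that every within-column entry ratio lie in $[e^{-\epsilon},e^{\epsilon}]$. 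Thus verifying $\epsilon$-weak edge LDP amounts to a purely algebraic statement about the columns of $M$.

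Next I would exploit the symmetric structure of the displayed $M$. Inspecting the four columns, each is a permutation of the multiset $\{\gamma_1,\gamma_2,\gamma_2,\gamma_3\}$, so the collection of within-column ratios does not depend on $c$ and is generated by the three distinct values $\gamma_1,\gamma_2,\gamma_3$. Up to reciprocals and the trivial ratio $\gamma_2/\gamma_2=1$, every such ratio equals one of $\gamma_1/\gamma_2$, $\gamma_1/\gamma_3$, or $\gamma_2/\gamma_3$. The three two-sided hypotheses of the lemma bound precisely these quantities within $[e^{-\epsilon},e^{\epsilon}]$, and being two-sided they simultaneously control the reciprocals $\gamma_2/\gamma_1$, $\gamma_3/\gamma_1$, $\gamma_3/\gamma_2$. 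Consequently the supremum in \eqref{eq-edge ldp} is at most $e^{\epsilon}$, and the conclusion follows.

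There is no genuine analytic obstacle; the argument is a finite case check. The only points that warrant care are the faithful translation from the probabilistic definition \eqref{eq-edge ldp} to the matrix condition ``the ratio of any two entries sharing a column lies in $[e^{-\epsilon},e^{\epsilon}]$,'' together with the observation that all four columns of $M$ coincide as multisets, which guarantees that no ratios beyond the three listed can occur.
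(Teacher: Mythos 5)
Your proof is correct and is precisely the ``direct verification'' that the paper alludes to when it omits the proof of Lemma \ref{thm-pairwise edge-ldp}: the weak edge LDP condition \eqref{eq-edge ldp} reduces to bounding within-column entry ratios of $M$, and since every column of $M$ is a permutation of the multiset $\{\gamma_1,\gamma_2,\gamma_2,\gamma_3\}$, the three hypothesized two-sided bounds on $\gamma_1/\gamma_2$, $\gamma_1/\gamma_3$, $\gamma_2/\gamma_3$ cover all possible ratios. No gaps; your care in noting that ordered input pairs make the condition two-sided is exactly the point that makes the finite case check complete.
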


The proof of Lemma \ref{thm-pairwise edge-ldp} can be directly verified and therefore omitted. 
When 
\[
\P(T_{ij}=(c,d)|R_{ij}=(a,b))=\P(a_{i,j}^\prime=c|a_{i,j}=a)\cdot \P(a_{j,i}^\prime=d|a_{j,i}=b),
\]
pairwise edge-flipping mechanism reduces into perturbing all edges independently in a directed graph.
In this case, when the probability transition matrix $M$ is equal to
\[
M=\begin{pmatrix}
 p^2&p(1-p)&p(1-p)&(1-p)^2\\
 p(1-p)&p^2&(1-p)^2&p(1-p)\\
 p(1-p)&(1-p)^2&p^2&p(1-p)\\
 (1-p)^2&p(1-p)&p(1-p)&p^2
\end{pmatrix},
\]
the pairwise edge-flipping mechanism in \eqref{eq-GRR} is equivalent to the classical edge-flipping mechanism, where 
each edge $a_{i,j}$ is perturbed independently and the perturb probability is
\begin{equation*}
 \P(a_{i,j}^\prime|a_{i,j})=
 \begin{cases}
  p,\ &if \   a_{i,j}^\prime=a_{i,j},\\
  1-p,\  &if \  a_{i,j}^\prime=1-a_{i,j}.
 \end{cases}
\end{equation*}

\subsection{$p_0$ model}
\label{subsection-p0}

In this paper, we focus on  directed networks with binary edges, i.e. the edge weight takes value from $\{0,1\}$.
Let $G_n$ be a simple directed graph on $n\geq 2$ nodes that are labeled as ``1, \ldots, n."
Here, ``simple" means there are no multiple edges and no self-loops in $G_n$.
Let $A=(a_{i,j})$ be the adjacency matrix of $G_n$, where
$a_{i,j}$ is an indicator variable of the directed edge from head node $i$ to tail node $j$.
If there exists a directed edge from $i$ to $j$, then $a_{i,j}=1$; otherwise, $a_{i,j}=0$.
Because $G_n$ is loopless, we set $a_{i,i}=0$ for convenience.
Let $d_i^+= \sum_{j \neq i} a_{i,j}$ be the out-degree of node $i$
and $\boldsymbol{d}^+=(d_1^+, \ldots, d_n^+)^\top$ be the out-degree sequence of the graph $G_n$.
Similarly, define $d_i^- = \sum_{j \neq i} a_{j,i}$ as the in-degree of node $i$
and $\boldsymbol{d}^-=(d_1^-, \ldots, d_n^-)^\top$ as the in-degree sequence.
The pair $\boldsymbol{d}=( (\boldsymbol{d}^+)^\top, (\boldsymbol{d}^-)^\top)^\top$ or $\{(d_1^+, d_1^-), \ldots, (d_n^+, d_n^-)\}$ is called the bi-degree sequence.

The in- and out-degrees of vertices (or degrees for undirected networks) preliminarily summarize the information contained in a network, and their distributions provide important insights for understanding the generative mechanism of networks. As pointed by \cite{Hay:2009},  we may fail to protect privacy if we release the degree sequence directly, because some graphs have unique degree sequences.
In other scenarios, the bi-degrees of the nodes are themselves sensitive information.
For instance, the out-degree of an individual in a
sexually transmitted disease network reveals sensitive information,
such as how many people that person may have infected [\cite{Yan:2021}]. To protect edge privacy, we use the discrete Laplace mechanism (output perturbation) and the edge-flipping mechanism (input perturbation) to release the bi-degree sequence of the directed network.

To conduct statistical inferences from a noisy bi-degree sequence, we need to specify a model on the original bi-degree sequence.
We use the classical $p_0$ model to characterize the degree sequence.
The $p_0$ model can be represented as
\begin{equation}
\label{eq-p0model}
\P(G_n)= \frac{1}{c(\boldsymbol{\alpha}, \boldsymbol{\beta})} \exp( \sum_i \alpha_i d_i^+ + \sum_j \beta_j d_j^- ),
\end{equation}
where $c(\boldsymbol{\alpha}, \boldsymbol{\beta})$ is a normalizing constant, $\boldsymbol{\alpha}=(\alpha_1, \ldots, \alpha_n)^\top$,
and $\boldsymbol{\beta}=(\beta_1, \ldots, \beta_n)^\top$.
The outgoingness parameter $\alpha_i$ characterizes how attractive the node is, and the incomingness parameter $\beta_{i}$ illustrates the extent to which the node is attracted to others, as discussed in \cite{Holland:Leinhardt:1981}.
Although the $p_0$ model looks simple, it is still useful in applications
where only the bi-degree sequence is used. First, it serves as a null model for hypothesis testing [\cite{Holland:Leinhardt:1981,Fienberg:Wasserman:1981,Zhang:Chen:2013}]. Second, it can be
used to reconstruct networks and make statistical inferences when only the bi-degree sequence is available,
 owing to privacy considerations
[\cite{Helleringer:Kohler:2007,Shao:2021}]. Third, it can be used in a preliminary
analysis to choose suitable statistics for network configurations
[\cite{Robins.et.al.2009}].

Because an out-edge from node $i$ pointing to $j$ is the in-edge of $j$ coming from $i$, the sum of the out-degrees equals the sum of the in-degrees.
If one transforms $(\boldsymbol{\alpha}, \boldsymbol{\beta})$ to $(\boldsymbol{\alpha}+c, \boldsymbol{\beta}-c)$, the probability distribution in \eqref{eq-p0model} does not change.
To identify the model parameters, we set $\beta_n=0$, as in \cite{Yan:Leng:Zhu:2016}.
The $p_0$ model can be formulated using an array of mutually independent Bernoulli random variables $a_{i,j}$, $1\le i\neq j\le n$, with the following probabilities [\cite{Yan:Leng:Zhu:2016}]:
\[
\P(a_{i,j}=1) = \frac{ e^{\alpha_i + \beta_j} }{ 1 + e^{\alpha_i + \beta_j} }.
\]
The normalizing constant $c(\boldsymbol{\alpha}, \boldsymbol{\beta})$ in \eqref{eq-p0model} is $\sum_{i\neq j}\log( 1 + e^{\alpha_i+\beta_j} )$.

\section{Edge-flipping algorithm to release the network }\label{section-edge flipping}

The edge-flipping mechanism is a classical randomized response algorithm guaranteeing LDP at the edge level for networks [\cite{Karwa:2017}].
A great advantage of the edge-flipping mechanism is that it produces a synthetic network with clear distributional properties [\cite{Hehir:2022}].
Therefore, we use this simple yet effective randomized response mechanism to generate synthetic networks under $\epsilon$-weak edge LDP, and use its bi-degree sequence to estimate the parameter later.

Specifically, denote the flipped network as $Q_{p}(A)$ with a flipping probability $1-p$ for some $p\geq 1/2$.
Let $A^\prime=(a^\prime_{i,j})$ be the released adjacency matrix of $G_n$. By the edge-flipping mechanism in Lemma \ref{lem-edge flipping}, we have
\begin{equation}\label{eq-graph edge-flipping}
a^\prime_{i,j}=Q_{p}(a_{i,j})=
\begin{cases}
   a_{i,j},&\text{with probability p} ~,\\
   1-a_{i,j},&\text{with probability 1-p}.
\end{cases}
\end{equation}
The output $a^\prime_{i,j}$ is also a Bernoulli random variable with probability
\begin{equation}
\P(a^\prime_{i,j}=1)=\P(Q_{p}(a_{i,j})=1)=pP_{i,j}+(1-p)(1-P_{i,j}),
\end{equation}
where
$
P_{i,j}=\P(a_{i,j}=1),
1-P_{i,j}=\P(a_{i,j}=0).
$

As we know, when the edges of the network are independently perturbed, condition \eqref{eq-edge ldp} in weak edge-LDP is defined to be simplified to for any $i,j=1,\dots,n$,
\begin{equation}\label{eq-edge ldp independent}
 \sup_{\tilde{a}_{i,j}\in \tilde{\mathcal{X}}}
\sup_{a_{i,j},a_{i,j}^{\prime}}
   \frac{P\big(Q(a_{i,j})=\tilde{a}_{i,j}|a_{i,j}\big)}{P\big(Q(a_{i,j}^\prime)=\tilde{a}_{i,j}|a_{i,j}^\prime\big)}\leq e^{\epsilon}.
\end{equation}
If we set
\[
p=1/(1+e^{-\epsilon}),
\]
then the edge-flipping mechanism satisfies the $\epsilon$-weak edge LDP condition \eqref{eq-edge ldp independent}.

Unlike the Laplace mechanism in Lemma \ref{lem-laplace} which is one of the output perturbation methods, the edge-flipping mechanism is an input perturbation mechanism. This mechanism outputs the entire synthetic network denoted as an adjacency matrix, not just a summary statistic, which is certainly more attractive to us.

\begin{lemma}[Edge-flipping mechanism]\label{lem-edge flipping}
The edge-flipping mechanism $Q_{p}$ satisfies the $\epsilon$-weak edge-LDP when $p=(1+e^{-\epsilon})^{-1}$.
\end{lemma}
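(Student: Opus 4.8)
The plan is to verify the defining inequality of $\epsilon$-weak edge LDP directly, exploiting the fact that the binary edge-flipping mechanism has a fully explicit conditional law with only finitely many cases to check. Since the edges are perturbed independently in \eqref{eq-graph edge-flipping}, it suffices to establish the simplified single-edge condition \eqref{eq-edge ldp independent} rather than the joint condition in Definition \ref{def-pairwise edge ldp}; the passage from the per-edge bound to the full mechanism is exactly the independence factorization already recorded around \eqref{eq-relation 2 of DP}. So the core of the argument is to bound the privacy-loss ratio for a single coordinate.

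First I would record the conditional probabilities of $Q_p$ from \eqref{eq-graph edge-flipping}: for $a,\tilde a\in\{0,1\}$,
\[
P\big(Q_p(a)=\tilde a \mid a\big)=
\begin{cases}
p, & \tilde a = a,\\
1-p, & \tilde a \neq a.
\end{cases}
\]
Then I would enumerate the combinations of the true inputs $a_{i,j},a_{i,j}^\prime$ and the fixed output $\tilde a_{i,j}$ appearing in the supremum of \eqref{eq-edge ldp independent}. There are only four: when $a_{i,j}=a_{i,j}^\prime$ (whether both agree with or both differ from $\tilde a_{i,j}$) the ratio is $1$; when exactly one of the two inputs equals $\tilde a_{i,j}$ the ratio is either $p/(1-p)$ or its reciprocal $(1-p)/p$. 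Hence the quantity on the left-hand side of \eqref{eq-edge ldp independent} takes values only in $\{1,\;p/(1-p),\;(1-p)/p\}$.

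Next, I would identify the worst case. Because $p=(1+e^{-\epsilon})^{-1}>1/2$ for every $\epsilon>0$ (consistent with the standing assumption $p\ge 1/2$), we have $p>1-p$, so the supremum of the three candidate values is $p/(1-p)$. It then remains to substitute the prescribed $p$: computing $1-p = e^{-\epsilon}/(1+e^{-\epsilon})$ gives
\[
\frac{p}{1-p}=\frac{1/(1+e^{-\epsilon})}{e^{-\epsilon}/(1+e^{-\epsilon})}=e^{\epsilon},
\]
so the supremum in \eqref{eq-edge ldp independent} equals $e^{\epsilon}$ exactly, and the $\epsilon$-weak edge LDP condition holds (in fact with equality, showing the bound is tight).

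There is no genuine analytic obstacle here; the statement is a direct computation. The only point requiring a moment's care is the bookkeeping in the previous paragraph, namely confirming that the worst-case ratio is $p/(1-p)$ rather than its reciprocal, which is precisely where the hypothesis $p\ge 1/2$ enters. Once that is pinned down, the chosen value of $p$ makes the bound saturate, which also explains why this is the natural (least-noise) flipping probability attaining $\epsilon$-weak edge LDP.
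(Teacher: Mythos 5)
Your proposal is correct and follows essentially the same route as the paper: both reduce to the single-edge condition \eqref{eq-edge ldp independent} via the independence of the per-edge perturbations, enumerate the finitely many values of the privacy-loss ratio (namely $1$, $p/(1-p)$, $(1-p)/p$), and check that the maximum equals $e^{\epsilon}$ exactly when $p=(1+e^{-\epsilon})^{-1}$. The only cosmetic difference is that you single out $p/(1-p)$ as the worst case using $p\ge 1/2$, whereas the paper simply takes the maximum over all three candidates, which amounts to the same computation.
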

\begin{proof}
 For any $(i,j)\in [n]\times[n]$, we have
 \[
 \sup_{1\leq i<j\leq n}\sup_{\tilde{x}\in
 \{0,1\}}\sup_{x,\tilde{x}\in \{0,1\}}
 \frac{P(Q(a_{i,j})=\tilde{x}|a_{i,j}=x)}{P(Q(a_{i,j})=\tilde{x}|a_{i,j}=x^\prime)}=\max\Big\{1,\frac{p}{1-p},\frac{1-p}{p}\Big\}=e^{\epsilon},
 \]
 where the last inequality follows from the assumption that $p=(1+e^{-\epsilon})^{-1}$.
\end{proof}

Lemma \ref{lem-edge flipping} characterizes the capacity of the uniform random edge-flipping mechanism to protect privacy under the framework of $\epsilon$-weak edge LDP. It should be noted that privacy in $\mathcal{G}$ is completely protected when $p=1/2$ or $\epsilon=0$, in the sense that there exists no algorithm that can infer $a_{i,j}$ based on $Q_{1/2}(a_{i,j})$ better than random guessing [\cite{Wang:2024}].

The released steps are described in Algorithm \ref{algorithm:a}, which returns the entire flipped graph denoted as an adjacency matrix.
By Lemma \ref{lemma:fg}, the bi-degree sequence of the output graph is edge-LDP.

\begin{algorithm}
\caption{Releasing the adjacency matrix and the bi-degree sequence }
\label{algorithm:a}
\KwData{The adjacency matrix $A=(a_{i,j})$ of $G_n$ and privacy parameter $\epsilon_n$}
\KwResult{The flipped adjacency matrix $A^\prime=(a_{i,j}^\prime)$ and the local-differentially private bi-degree sequence $\boldsymbol{d}^\prime=(d_1^{'+},\ldots,d_{n}^{'+},d_1^{'-},\ldots,d_{n}^{'-})^\top$}
\For{ $i= 1 \to n$}{
 \For{$j= 1 \to n$}{
 Generates an $n\times n$ matrix $Y=(y_{i,j})$ whose elements are random numbers of 0 or 1 generated with probability p;

 If $y_{i,j}=1$, then $a_{i,j}^\prime=a_{i,j}$;

 If $y_{i,j}=0$, then $a_{i,j}^\prime=1-a_{i,j}$;

 $d_i^{'+}= \sum_{j \neq i} a_{i,j}^{'}$ \ ,\ $d_i^{'-}= \sum_{i \neq j} a_{i,j}^{'},\ (i=1,\cdots ,n)$.
 }
}
\end{algorithm}

We use the edge-flipping mechanism in Lemma \ref{lem-edge flipping} to
release the adjacency matrix 
and get the bi-degree sequence 
under $\epsilon$-weak edge LDP. Let $d_i^{'+}= \sum_{j \neq i} a_{i,j}^{'}$ be the flipped out-degree of node $i$, $d_j^{'-}= \sum_{i \neq j} a_{i,j}^{'}$ be the flipped in-degree of node $j$, and $\boldsymbol{d}^{'}=(d_1^{'},\ldots,d_{2n}^{'})^\top=(d_1^{'+},\ldots,d_{n}^{'+},d_1^{'-},\ldots,d_{n}^{'-})^\top$ be the flipped bi-degree sequence of the graph $G_n$. Then we obtain
\begin{equation}
\P(a_{i,j}^{'}=1)
=pP_{i,j}+(1-p)(1-P_{i,j})
=\frac{pe^{\alpha_i+\beta_j}+1-p}{1+e^{\alpha_i+\beta_j}}
\end{equation}
Later, we use the moment method to estimate the degree parameters in the $p_0$ model.

The mechanism releases a complete privacy-preserving network in the form of a perturbed adjacency matrix, despite its $O(n^2)$ complexity. Unlike mechanisms that only release aggregate statistics, our approach enables publication of the entire synthetic network.
Following appropriate debiasing procedures as described in \cite{Wang:2024},
this synthetic matrix preserves the expected structure of the original network while providing formal privacy guarantees.
It should be noted, however, that certain fine-grained structural patterns of the original network may not be fully recoverable due to the inherent randomness introduced for privacy protection [\cite{Wang:2024}]. Fundamentally, this methodology facilitates the release of richer network data, thereby supporting a broader spectrum of downstream analysis tasks beyond parameter estimation.

\section{Theoretical properties of the estimator }
\label{section-theoretical properties}

Having established the edge-flipping mechanism for releasing the entire network under $\epsilon$-weak edge LDP, a fundamental question arises: can valid statistical inference still be conducted from such a locally perturbed network? This section addresses this question by developing moment estimators for the parameters in the $p_0$ model using the noisy bi-degree sequence from the flipped graph. Given that the number of parameters grows with the network size $n$, the asymptotic analysis is inherently high-dimensional and non-standard. Our primary objective is to derive the theoretical guarantees for this LDP estimator, $\widehat{\boldsymbol{\theta}}_{in}$. We rigorously establish its consistency and asymptotic normality, demonstrating that despite the stronger privacy constraints and the substantial noise introduced at the input level, the estimator converges to the true parameter vector at a quantifiable rate. These results fill a critical theoretical gap by providing the first asymptotic guarantees for parameter estimation in network models under edge LDP.

We use the following moment equations to estimate the degree parameter in the $p_0$ model:
\renewcommand{\arraystretch}{1.5}
\begin{equation}\label{eq:likelihood-DP}
\begin{array}{lcl}
d_i^{\prime +}  & = & \sum_{j\neq i} \frac{pe^{\alpha_i+\beta_j}+(1-p)}{1+e^{\alpha_i+\beta_j}}, ~~i=1, \ldots, n, \\
d_j^{\prime - } & = & \sum_{i\neq j} \frac{pe^{\alpha_i+\beta_j}+(1-p)}{1+e^{\alpha_i+\beta_j}}, ~~j=1, \ldots, n-1,
\end{array}
\end{equation}
where $\boldsymbol{d}^\prime$ is the edge-LDP bi-degree sequence of Algorithm \ref{algorithm:a}.
The fixed point algorithm can be used to solve the above system of equations.
The solution $\widehat{\boldsymbol{\theta}}_{in}$ to the equations \eqref{eq:likelihood-DP} is the edge-LDP estimator of $\boldsymbol{\theta}$, according to Lemma \ref{lemma:fg},
where $\widehat{\boldsymbol{\theta}}_{in}=(\hat{\alpha}_1, \ldots, \hat{\alpha}_n, \hat{\beta}_1, \ldots, \hat{\beta}_{n-1} )^\top$
and $\hat{\beta}_n=0$.

We are curious whether the parameter estimate using the edge-LDP bi-degree sequence published by the edge-flipping mechanism still satisfies the consistency and asymptotic normality like those of the edge-DP estimators via output perturbation mechanisms in \cite{Yan:2021}.

Because the number of parameters increases with the number of nodes, classical statistical theories cannot be
applied directly to obtain the asymptotic results of the
estimator. We use the Newton method developed in \cite{Yan:Leng:Zhu:2016} to establish consistency. Here, we need to deal with the high-dimensional issue and the noise; in contrast, \cite{Yan:Leng:Zhu:2016} only considered the high-dimensional issue.
The proof for the existence and consistency of $\widehat{\boldsymbol{\theta}}_{in}$ can be briefly described as follows.
Define a system of functions:
\renewcommand{\arraystretch}{1.2}
\begin{equation}\label{eq:F-DP}
\large
\begin{array}{lll}
F_i( \boldsymbol{\theta} ) &  =  & \sum_{k=1; k \neq i}^n \frac{pe^{\alpha_i+\beta_k}+(1-p)}{1+e^{\alpha_i+\beta_k} }-d_i^{\prime+}, ~~~  i=1, \ldots, n, \\
F_{n+j}( \boldsymbol{\theta}) & = & \sum_{k=1; k\neq j}^n \frac{pe^{\alpha_k+\beta_j}+(1-p)}{1+e^{\alpha_k+\beta_j}}-d_j^{\prime-},  ~~~  j=1, \ldots, n, \\
F( \boldsymbol{\theta} ) & = & (F_1( \boldsymbol{\theta} ), \ldots, F_{2n-1}( \boldsymbol{\theta} ))^\top.
\end{array}
\end{equation}
Note that the solution to the equation $F( \boldsymbol{\theta} )=0$ is precisely the $\epsilon$-edge LDP estimator.
We construct the Newton iterative sequence: $\boldsymbol{\theta}^{(k+1)} = \boldsymbol{\theta}^{(k)} - [ F'(\boldsymbol{\theta}^{(k)})]^{-1} F(\boldsymbol{\theta}^{(k)})$. If the initial value is chosen as
the true value $\boldsymbol{\theta}^*$, then it is left to bound the error between the initial point
and the limiting point to show the consistency. This is done by establishing a geometric convergence rate for the iterative sequence.
The existence and consistency of $\widehat{\boldsymbol{\theta}}_{in}$ is stated below. In general, the privacy parameter $\epsilon_n$ is small. Therefore, we assume that $\epsilon_n$ is bounded by a fixed constant. This simplifies the notation.

\begin{theorem}\label{Thm-consistency}
Assume that $A \sim \P_{ \boldsymbol{\theta}^*}$, where $\P_{ \boldsymbol{\theta}^*}$ denotes
the probability distribution \eqref{eq-p0model} on $A$ under the parameter $\boldsymbol{\theta}^*$.
If $ \epsilon_n^{-1}e^{12\|\boldsymbol{\theta}^*\|_\infty } = o( (n/\log n)^{1/2} )$, 
 then with probability approaching one as $n$ goes to infinity, the estimator $\widehat{\boldsymbol{\theta}}_{in}$ exists
and satisfies
\[
\|\widehat{\boldsymbol{\theta}}_{in} - \boldsymbol{\theta}^* \|_\infty = O_p\left( \frac{ (\log n)^{1/2}e^{6\|\boldsymbol{\theta}^*\|_\infty} }{ n^{1/2}\epsilon_n } \right)=o_p(1).
\]
Furthermore, if $\widehat{\boldsymbol{\theta}}_{in}$ exists, it is unique.
\end{theorem}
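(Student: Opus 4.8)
The plan is to prove existence and consistency via a Newton iteration started at the true parameter $\boldsymbol{\theta}^*$, exploiting the special structure of the map $F$ defined in \eqref{eq:F-DP}. Since the $\epsilon$-edge LDP estimator is by definition the solution of $F(\boldsymbol{\theta})=0$, it suffices to show that $F$ has a zero within a shrinking $\ell_\infty$-ball around $\boldsymbol{\theta}^*$ and to control the radius of that ball. Writing $q_{i,j}(\boldsymbol{\theta}) = (pe^{\alpha_i+\beta_j}+1-p)/(1+e^{\alpha_i+\beta_j})$, note that $\E[a'_{i,j}] = q_{i,j}(\boldsymbol{\theta}^*)$, so each coordinate $F_i(\boldsymbol{\theta}^*)$ and $F_{n+j}(\boldsymbol{\theta}^*)$ is a centered sum of $n-1$ independent, $[0,1]$-bounded terms. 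First I would bound this residual at the truth: by Hoeffding's inequality applied to each of the $2n-1$ coordinates together with a union bound, $\|F(\boldsymbol{\theta}^*)\|_\infty = O_p((n\log n)^{1/2})$; the edge-flipping does not shrink the per-term variance (it keeps $q_{i,j}$ bounded away from $0$ and $1$), so this is the correct order.

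The crux is the analysis of the Jacobian $F'(\boldsymbol{\theta})$. A direct computation gives $\partial q_{i,j}/\partial\alpha_i = \partial q_{i,j}/\partial\beta_j = (2p-1)\,e^{\alpha_i+\beta_j}/(1+e^{\alpha_i+\beta_j})^2$, so every entry of $F'(\boldsymbol{\theta})$ carries the common factor $2p-1 = \tanh(\epsilon_n/2) \asymp \epsilon_n$. This shrinking factor is precisely what inflates the inverse and produces the $\epsilon_n^{-1}$ in the stated rate. The matrix $F'(\boldsymbol{\theta})$ is symmetric (the cross-derivatives $\partial F_i/\partial\beta_k$ and $\partial F_{n+k}/\partial\alpha_i$ coincide) and falls into the structured, diagonally dominant matrix class of \cite{Yan:Leng:Zhu:2016}, whose inverse admits an explicit approximation $S$. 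For $\boldsymbol{\theta}$ in a fixed $\ell_\infty$-neighborhood of $\boldsymbol{\theta}^*$, the factor $e^{\alpha_i+\beta_j}/(1+e^{\alpha_i+\beta_j})^2$ is bounded below by a multiple of $e^{-2\|\boldsymbol{\theta}^*\|_\infty}$, which yields $\|F'(\boldsymbol{\theta})^{-1}\|_\infty \lesssim e^{6\|\boldsymbol{\theta}^*\|_\infty}/(\epsilon_n n)$ after combining the diagonal lower bound with the error incurred by the approximate inverse $S$. Multiplying this bound by the residual gives a first Newton step of size $O_p(e^{6\|\boldsymbol{\theta}^*\|_\infty}(\log n)^{1/2}/(\epsilon_n n^{1/2}))$, the claimed rate.

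It then remains to upgrade the single step into a convergent iteration. I would establish a geometric-convergence (Newton--Kantorovich-type) bound: controlling the Lipschitz modulus of $F'$ on the neighborhood and combining it with the inverse-norm bound shows the iteration map is a contraction, with the iterates trapped in the ball of radius $O_p(e^{6\|\boldsymbol{\theta}^*\|_\infty}(\log n)^{1/2}/(\epsilon_n n^{1/2}))$. The growth condition $\epsilon_n^{-1}e^{12\|\boldsymbol{\theta}^*\|_\infty}=o((n/\log n)^{1/2})$ is exactly what forces the product of the inverse-norm bound, the residual, and the Lipschitz constant to be $o(1)$, guaranteeing both that the iterates never leave the neighborhood and that the limit $\widehat{\boldsymbol{\theta}}_{in}$ solves $F(\boldsymbol{\theta})=0$ with the stated error.

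The main obstacle is controlling the inverse of the $(2n-1)\times(2n-1)$ Jacobian uniformly in $\boldsymbol{\theta}$ as $n\to\infty$ while simultaneously absorbing the $\epsilon_n^{-1}$ amplification: standard inverse bounds fail at growing dimension, so the argument rests entirely on the approximate-inverse machinery for the structured class and on tracking the powers of $e^{\|\boldsymbol{\theta}^*\|_\infty}$ accurately. Finally, uniqueness follows because $F=\nabla\phi$ for a potential $\phi$ whose Hessian is $F'$; since $2p-1>0$ renders $F'$ symmetric and positive definite once the identifiability constraint $\beta_n=0$ is imposed, $\phi$ is strictly convex and its stationary point is unique.
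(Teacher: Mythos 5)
Your overall skeleton---Newton iteration started at $\boldsymbol{\theta}^*$, the Hoeffding bound $\|F(\boldsymbol{\theta}^*)\|_\infty=O_p((n\log n)^{1/2})$, the observation that $2p-1=\tanh(\epsilon_n/2)\asymp\epsilon_n$ so that the Jacobian lies in the structured class $\mathcal{L}_n(m,M)$ of \cite{Yan:Leng:Zhu:2016} with $m\asymp \epsilon_n n e^{-2\|\boldsymbol{\theta}^*\|_\infty}$, geometric convergence of the iterates, and uniqueness via the strictly convex potential---is exactly the route the paper takes. However, there is a genuine gap at your central quantitative step. The claimed bound $\|F'(\boldsymbol{\theta})^{-1}\|_\infty\lesssim e^{6\|\boldsymbol{\theta}^*\|_\infty}/(\epsilon_n n)$ is false as an operator-norm statement: the approximate inverse $S$ in \eqref{eq-S} has rows whose absolute entries sum to $1/v_{i,i}+(2n-1)/v_{2n,2n}\asymp n/m\asymp e^{2\|\boldsymbol{\theta}^*\|_\infty}/\epsilon_n$, and since $\|V^{-1}-S\|_\infty\le(2n-1)\|V^{-1}-S\|_{\max}=O\bigl(e^{6\|\boldsymbol{\theta}^*\|_\infty}/(\epsilon_n n^2)\bigr)$ is of smaller order, in fact $\|V^{-1}\|_\infty\asymp e^{2\|\boldsymbol{\theta}^*\|_\infty}/\epsilon_n$; it does not decay in $n$ at all. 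Your bound could only hold if $n\lesssim e^{6\|\boldsymbol{\theta}^*\|_\infty}$, which the hypothesis $\epsilon_n^{-1}e^{12\|\boldsymbol{\theta}^*\|_\infty}=o((n/\log n)^{1/2})$ excludes. (If you instead meant the entrywise maximum norm, the bound is true, but converting it into a bound on the vector $V^{-1}F$ then costs a factor $2n-1$.) Either way, the step ``multiply the inverse-norm bound by the residual'' yields something of order $e^{2\|\boldsymbol{\theta}^*\|_\infty}(n\log n)^{1/2}/\epsilon_n\to\infty$, not the claimed rate, and the same defect propagates into any Kantorovich product $\aleph\rho\delta$ assembled from operator norms, so the contraction argument as written also fails.

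The missing ingredient is the cancellation identity that is the heart of the Newton analysis in \cite{Yan:Leng:Zhu:2016} and in the paper's proof. Decompose $[F'(\boldsymbol{\theta}^*)]^{-1}F(\boldsymbol{\theta}^*)=SF(\boldsymbol{\theta}^*)+(V^{-1}-S)F(\boldsymbol{\theta}^*)$. Because the sum of flipped out-degrees equals the sum of flipped in-degrees, the coordinates of $F$ in \eqref{eq:F-DP} satisfy $\sum_{i=1}^{n}F_i(\boldsymbol{\theta})=\sum_{j=1}^{n}F_{n+j}(\boldsymbol{\theta})$ identically in $\boldsymbol{\theta}$, hence for $i\le n$,
\[
(SF)_i=\frac{F_i}{v_{i,i}}+\frac{1}{v_{2n,2n}}\Bigl(\sum_{j=1}^{n}F_j-\sum_{j=n+1}^{2n-1}F_j\Bigr)=\frac{F_i}{v_{i,i}}+\frac{F_{2n}}{v_{2n,2n}},
\]
so the potentially $(2n-1)$-term sum collapses to a single coordinate and $\|SF\|_\infty\lesssim\|F\|_\infty/m=O_p\bigl(e^{2\|\boldsymbol{\theta}^*\|_\infty}(\log n)^{1/2}/(\epsilon_n n^{1/2})\bigr)$, while the remainder is handled entrywise, $\|(V^{-1}-S)F\|_\infty\le(2n-1)\|V^{-1}-S\|_{\max}\|F\|_\infty=O_p\bigl(e^{6\|\boldsymbol{\theta}^*\|_\infty}(\log n)^{1/2}/(\epsilon_n n^{3/2})\bigr)$. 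Only this structural cancellation---invoked at every Newton iterate, not merely the first step---produces the rate $e^{6\|\boldsymbol{\theta}^*\|_\infty}(\log n)^{1/2}/(\epsilon_n n^{1/2})$ and lets the stated condition guarantee geometric convergence. Your residual bound, the $\tanh(\epsilon_n/2)$ computation, and the convex-potential uniqueness argument are all sound, but the proof cannot be completed along the lines you wrote without this additional idea.
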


\begin{remark}
First, the condition $\epsilon_n^{-1}e^{12\|\boldsymbol{\theta}^*\|_\infty } = o( (n/\log n)^{1/2} )$ in Theorem \ref{Thm-consistency} similarly
exhibits an interesting trade-off between the privacy parameter $\epsilon_n$ and $\|\boldsymbol{\theta}^*\|_\infty$ as \cite{Yan:2021}.
A smaller privacy parameter $\epsilon_n$ (stronger privacy) allows a larger $\|\boldsymbol{\theta}^*\|_\infty$
and if $\|\boldsymbol{\theta}^*\|_\infty$ is large, the sample size $n$ must increase or privacy constraints (via $\epsilon_n$) must relax to satisfy the condition.
Second, using the edge-flipping mechanism, we develop theoretical guarantees for edge-LDP estimation based on $p_0$ model, demonstrating conditions under which this privacy guarantee can be upheld while achieving convergence rates that match the known rates with edge-DP guarantee.
\end{remark}

In order to present the asymptotic normality, we introduce a
class of matrices.
Given two positive numbers $m$ and $M$ with $M \ge m >0$, we say the
$(2n-1)\times(2n-1)$ matrix $V=(v_{i,j})$ belongs to the class
$\mathcal{L}_{n}(m, M)$ if the following holds:
\begin{equation}\label{eq:LmM}
\begin{array}{l}
m\le v_{i,i}-\sum_{j=n+1}^{2n-1} v_{i,j} \le M, ~~ i=1,\ldots, n-1; ~~~ v_{n,n}=\sum_{j=n+1}^{2n-1} v_{n,j}, \\
v_{i,j}=0, ~~ i,j=1,\ldots,n,~ i\neq j, \\
v_{i,j}=0, ~~ i,j=n+1, \ldots, 2n-1,~ i\neq j,\\
m\le v_{i,j}=v_{j,i} \le M, ~~ i=1,\ldots, n,~ j=n+1,\ldots, 2n-1,~ j\neq n+i, \\
v_{i,n+i}=v_{n+i,i}=0,~~ i=1,\ldots,n-1,\\
v_{i,i}= \sum_{k=1}^n v_{k,i}=\sum_{k=1}^n v_{i,k}, ~~ i=n+1, \ldots, 2n-1.
\end{array}
\end{equation}

Clearly, if $V\in\mathcal{L}_{n}(m, M)$, then $V$ is a $(2n-1)\times
(2n-1)$ diagonally dominant, symmetric nonnegative
matrix and $V$ has the following structure:

\[
V=\begin{pmatrix}
  V_{11} & V_{12}
\\
V_{12}^\top&V_{22}
\end{pmatrix}
\]
where $V_{11}$ ($n$ by $n$) and $V_{22}$ ($n-1$ by $n-1$) are diagonal
matrices, $V_{12}$ is a nonnegative matrix whose nondiagonal elements
are positive and diagonal elements equal to zero.

Define $v_{2n,i}=v_{i,2n}:= v_{i,i}-\sum_{j=1;j\neq i}^{2n-1} v_{i,j}$
for $i=1,\ldots, 2n-1$ and $v_{2n,2n}=\sum_{i=1}^{2n-1} v_{2n,i}$.
Then $m \le v_{2n,i} \le M$ for $i=1,\ldots, n-1$, $v_{2n,i}=0$ for
$i=n, n+1,\ldots, 2n-1$ and $v_{2n,2n}=\sum_{i=1}^n v_{i, 2n}=\sum
_{i=1}^n v_{2n, i}$. We propose to approximate the inverse of $V$,
$V^{-1}$, by the matrix $S=(s_{i,j})$, which is defined as
\begin{equation}\label{eq-S}
s_{i,j}=\left\{\begin{array}{ll}\frac{\delta_{i,j}}{v_{i,i}} + \frac{1}{v_{2n,2n}}, & i,j=1,\ldots,n, \\
-\frac{1}{v_{2n,2n}}, & i=1,\ldots, n,~~ j=n+1,\ldots,2n-1, \\
-\frac{1}{v_{2n,2n}}, & i=n+1,\ldots,2n-1,~~ j=1,\ldots,n, \\
\frac{\delta_{i,j}}{v_{i,i}}+\frac{1}{v_{2n,2n}}, & i,j=n+1,\ldots, 2n-1,
\end{array}
\right.
\end{equation}
where $\delta_{i,j}=1$ when $i=j$, and $\delta_{i,j}=0$ when $i\neq j$.
Note that $S$ can be rewritten as
\[
S=\begin{pmatrix}
  S_{11} & S_{12}
\\
S_{12}^\top&S_{22}
\end{pmatrix}
\]
where $S_{11} =1/v_{2n, 2n} + \operatorname{diag}(1/v_{1,1}, 1/v_{2,2}, \ldots,
1/v_{n,n})$, $S_{12}$ is an $n\times(n-1)$ matrix whose elements are
all equal to $-1/v_{2n, 2n}$, and $S_{22} = 1/v_{2n, 2n} + \operatorname{diag}(1/v_{n+1, n+1},  \ldots, 1/v_{2n-1, 2n-1})$.

We use $V$ to denote the Jacobian matrix of $F(\boldsymbol{\theta})$ based on $p_0$ model.
It can be shown that
\[
v_{i,j}=\frac{ (2p-1)e^{\alpha_i + \beta_j} }{ (1 + e^{\alpha_i + \beta_j})^2 },~~ 1\le i\neq j \le n;
\]
\begin{equation}\label{eq-vii}
v_{i,i}=
\begin{cases}
   \sum_{k\neq i}\frac{(2p-1)e^{\alpha_i+\beta_k}}{(1+e^{\alpha_i+\beta_k})^2},\ i=1,\cdots,n\\
   \sum_{k\neq i-n}\frac{(2p-1)e^{\alpha_k+\beta_{i-n}}}{(1+e^{\alpha_k+\beta_{i-n}})^2},\ i=n+1,\cdots,2n

\end{cases}
\end{equation}
Because $e^x/(1+e^x)^2$ is an increasing function on $x$ when $x\ge 0$, and
a decreasing function when $x\le 0$, we have
\[
\frac{(2p-1)(n-1)e^{2\| \theta\|_\infty}}{(1+e^{2\| \theta\|_\infty})^2}\le v_{i,i} \le \frac{(2p-1)(n-1)}{4}, ~~ i=1, \ldots, 2n.
\]
Therefore, $V\in \mathcal{L}_n(m,M)$, where $m$ is the left expression and $M$ is the right expression in the above inequality.
The asymptotic distribution of $\widehat{\boldsymbol{\theta}}_{in}$ depends on $V$.
Let $\boldsymbol{g}=(d_1^+, \ldots, d_n^+, d_1^-, \ldots, d_{n-1}^-)^\top$ and
$\boldsymbol{g}^\prime=(d_1^{\prime+}, \ldots, d_n^{\prime+}, d_1^{\prime-}, \ldots, d_{n-1}^{\prime-})^\top$.
If we apply Taylor's expansion to each component of $\boldsymbol{g}^\prime -\E \boldsymbol{g}^\prime$, then the second-order term in the expansion is $V(\widehat{\boldsymbol{\theta}}_{in} - \boldsymbol{\theta})$.
Because $V^{-1}$ does not have a closed form, we work with $S$ defined at \eqref{eq-S} to approximate it. Then, we represent $\widehat{\boldsymbol{\theta}}_{in} - \boldsymbol{\theta}$ as the sum of
$S(\boldsymbol{g}^\prime -\E \boldsymbol{g}^\prime)$ and a remainder. The central limit theorem is proved by establishing the asymptotic normality of $S(\boldsymbol{g}^\prime -\E \boldsymbol{g}^\prime)$ and
showing that the remainder is negligible. Let $\boldsymbol{\sigma}^2=(\sigma_1^2,\ldots,\sigma_n^2,\sigma_{n+1}^2,\ldots,\sigma_{2n}^2)$, for any $i=1,\ldots, 2n$, we have
\[
\begin{aligned}
\mathrm{Var}(d_i^{'}) =\sigma_i^2 &=
\begin{cases}
\mathrm{Var}(d_i^{'+}) =\mathrm{Var}(\sum_{k\neq i}a_{i,k}^{'})
 =\sum_{k\neq i}\mathrm{Var}(a_{i,k}^{'})
 \\
\mathrm{Var}(d_{i-n}^{'-}) =\mathrm{Var}(\sum_{k\neq i-n}a_{k,i-n}^{'})
 =\sum_{k\neq i-n}\mathrm{Var}(a_{k,i-n}^{'})
\end{cases}\\
&=
\begin{cases}
\sum_{k\neq i}\frac{[pe^{\alpha_i+\beta_k}+1-p][(1-p)e^{\alpha_i+\beta_k}+p]}{(1+e^{\alpha_i+\beta_k})^2}~~,~~1\leq i \leq n  \\
\sum_{k\neq i-n}\frac{[pe^{\alpha_k+\beta_{i-n}}+1-p][(1-p)e^{\alpha_k+\beta_{i-n}}+p]}{(1+e^{\alpha_k+\beta_{i-n}})^2}
~~,~~n+1\leq i \leq 2n
\end{cases}
\end{aligned}
\]
Clearly, it shows that $[pe^{\alpha_i+\beta_k}+1-p][(1-p)e^{\alpha_i+\beta_k}+p]\geq e^{\alpha_i+\beta_k}$, that is
$\mathrm{Var}(a_{i,k}^\prime)\geq\mathrm{Var}(a_{i,k})$, then we have  $\sigma_i^2>v_{i,i}$ and $\frac{\sigma_i^2}{v_{i,i}^2}+\frac{\sigma_{2n}^2}{v_{2n,2n}^2}>\frac{1}{v_{i,i}}+\frac{1}{v_{2n,2n}}$. This indicates that the variance of the estimator obtained by adding noise is larger than that of the estimator obtained without adding noise.

We formally state the asymptotic normality of $\widehat{\boldsymbol{\theta}}_{in}$ as follows.
\begin{theorem}\label{Thm-normality}
Assume that $A\sim \P_{\boldsymbol{\theta}^*}$, where $\P_{\boldsymbol{\theta}^*}$ denotes the probability distribution \eqref{eq-p0model} on $A$ under the true parameter $\boldsymbol{\theta}^*$ .\\
If $\epsilon_n^{-1}e^{18\|\boldsymbol{\theta}^*\|_\infty}
=o(n^{1/2}(\log{n})^{-1})$,
then for any fixed $k\ge 1$, as $n \to\infty$, the vector consisting of the first $k$ elements of $(\widehat{\boldsymbol{\theta}}_{in}-\boldsymbol{\theta}^*)$ is asymptotically multivariate normal with mean $\mathbf{0}$ and covariance matrix given by the upper left $k \times k$ block of the matrix
\(M=\begin{pmatrix}
  M_{11} & M_{12}\\
  M_{12}^\top&M_{22}\end{pmatrix}\)
  , where
 $M_{11} =\sigma_{2n}^2/v_{2n, 2n}^2 + \operatorname{diag}(\sigma_1^2/v_{1,1}^2, \ldots,
\sigma_n^2/v_{n,n}^2)$, $M_{12}$ is an $n\times(n-1)$ matrix whose elements are all equal to
$-\sigma_{2n}^2/v_{2n, 2n}^2$, and $M_{22} = \sigma_{2n}^2/v_{2n, 2n}^2 + \operatorname{diag}(\sigma_{n+1}^2/v_{n+1, n+1}^2, \ldots, \sigma_{2n-1}^2/v_{2n-1, 2n-1}^2)$. \\
\end{theorem}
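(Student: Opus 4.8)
The plan is to linearize the estimating equation $F(\widehat{\boldsymbol{\theta}}_{in})=0$ around the truth $\boldsymbol{\theta}^*$ and extract a Gaussian leading term. Since the $i$-th moment function satisfies $F_i(\boldsymbol{\theta}^*)=\E d_i^{\prime+}-d_i^{\prime+}$ (and analogously for the in-degree coordinates), we have $F(\boldsymbol{\theta}^*)=\E\boldsymbol{g}^\prime-\boldsymbol{g}^\prime$, where $\boldsymbol{g}^\prime-\E\boldsymbol{g}^\prime$ is a vector of centered sums of independent Bernoulli variables $a_{i,j}^\prime-\E a_{i,j}^\prime$. A second-order Taylor expansion of $F$ gives $0=F(\boldsymbol{\theta}^*)+V(\widehat{\boldsymbol{\theta}}_{in}-\boldsymbol{\theta}^*)+R$, where $V=F^\prime(\boldsymbol{\theta}^*)$ is the Jacobian with entries \eqref{eq-vii} (so $V\in\mathcal{L}_n(m,M)$) and $R$ is the remainder. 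Writing $V^{-1}=S+(V^{-1}-S)$ with $S$ as in \eqref{eq-S}, I would rearrange this into
\[
\widehat{\boldsymbol{\theta}}_{in}-\boldsymbol{\theta}^* = S(\boldsymbol{g}^\prime-\E\boldsymbol{g}^\prime) + (V^{-1}-S)(\boldsymbol{g}^\prime-\E\boldsymbol{g}^\prime) - V^{-1}R.
\]
The proof then splits into (i) a central limit theorem for the leading term $S(\boldsymbol{g}^\prime-\E\boldsymbol{g}^\prime)$ and (ii) showing the two remainder terms are of smaller order than the standard deviation of the leading term.

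For step (i), I would first exploit the structure of $S$. Because the flipped graph still satisfies $\sum_{k=1}^n d_k^{\prime+}=\sum_{k=1}^n d_k^{\prime-}$, the weighted column sums in $S$ telescope and the $i$-th coordinate for $i\le n$ collapses to
\[
[S(\boldsymbol{g}^\prime-\E\boldsymbol{g}^\prime)]_i = \frac{d_i^{\prime+}-\E d_i^{\prime+}}{v_{i,i}} + \frac{d_n^{\prime-}-\E d_n^{\prime-}}{v_{2n,2n}},
\]
with the symmetric expression for $i>n$. I would then apply the Cram\'er--Wold device: for any fixed coefficient vector, the corresponding linear combination of the first $k$ coordinates is a sum over edges of independent bounded summands, to which the Lyapunov CLT applies, since each edge contributes $O(1)$ variance, there are order $n$ of them, and the $e^{\|\boldsymbol{\theta}^*\|_\infty}$ and $(2p-1)$ factors cancel in the Lyapunov ratio, which is of order $n^{-1/2}$. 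The limiting covariance follows directly: the terms $d_i^{\prime+}$ for distinct $i\le k$ use disjoint rows of $A^\prime$ and are independent, while the shared term $(d_n^{\prime-}-\E d_n^{\prime-})/v_{2n,2n}$ generates the common off-diagonal entries; the single-edge overlaps between $d_i^{\prime+}$ and $d_n^{\prime-}$ are of relative order $1/n$ and hence negligible. This reproduces exactly the blocks $M_{11},M_{12},M_{22}$ with $\sigma_i^2=\mathrm{Var}(d_i^\prime)$ as displayed before the theorem.

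For step (ii), two estimates are required. First, for the matrix-approximation error I would invoke the bound on $\|V^{-1}-S\|_{\max}$ valid for members of $\mathcal{L}_n(m,M)$ (as established for this matrix class in \cite{Yan:Leng:Zhu:2016}), which is of strictly smaller order than the diagonal entries of $S$; combined with the concentration bound $\|\boldsymbol{g}^\prime-\E\boldsymbol{g}^\prime\|_\infty=O_p((\sigma_{\max}^2\log n)^{1/2})$ obtained from Bernstein's inequality, this renders $(V^{-1}-S)(\boldsymbol{g}^\prime-\E\boldsymbol{g}^\prime)$ negligible. Second, for the Taylor remainder, the third derivatives of the moment functions are bounded by $(2p-1)$ times bounded functions of $e^{\alpha_i+\beta_k}$, so $\|R\|_\infty=O\big((2p-1)n\,\|\widehat{\boldsymbol{\theta}}_{in}-\boldsymbol{\theta}^*\|_\infty^2\big)$; premultiplying by $V^{-1}$ (whose row sums scale like $1/m$) and substituting the consistency rate of Theorem \ref{Thm-consistency} controls $\|V^{-1}R\|_\infty$.

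The main obstacle will be step (ii): verifying that both remainders are asymptotically negligible relative to $S(\boldsymbol{g}^\prime-\E\boldsymbol{g}^\prime)$, uniformly over the first $k$ coordinates. This is precisely where the stronger rate condition $\epsilon_n^{-1}e^{18\|\boldsymbol{\theta}^*\|_\infty}=o(n^{1/2}(\log n)^{-1})$ is consumed: squaring the consistency rate contributes $e^{12\|\boldsymbol{\theta}^*\|_\infty}$, the factors $1/m$ from $V^{-1}$ and from the $\mathcal{L}_n(m,M)$-approximation bound supply further powers of $e^{\|\boldsymbol{\theta}^*\|_\infty}$ and of $(2p-1)^{-1}\asymp\epsilon_n^{-1}$ (recall $2p-1=\tanh(\epsilon_n/2)$), and the $\log n$ enters through the Bernstein tail bound on $\|\boldsymbol{g}^\prime-\E\boldsymbol{g}^\prime\|_\infty$. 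Careful bookkeeping of these exponential-in-$\|\boldsymbol{\theta}^*\|_\infty$ and $\epsilon_n$ factors is the technical heart of the argument; once they are shown to meet the stated condition, the CLT of step (i) delivers the claimed normality with covariance equal to the upper-left $k\times k$ block of $M$.
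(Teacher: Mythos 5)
Your proposal follows essentially the same route as the paper: a Taylor expansion of the moment equations whose second-order term is $V(\widehat{\boldsymbol{\theta}}_{in}-\boldsymbol{\theta}^*)$, the approximation of $V^{-1}$ by the closed-form matrix $S$ of \eqref{eq-S} (exploiting $V\in\mathcal{L}_n(m,M)$ as in \cite{Yan:Leng:Zhu:2016}), a CLT for the leading term $S(\boldsymbol{g}^\prime-\E\boldsymbol{g}^\prime)$, and negligibility of the approximation-error and Taylor-remainder terms under the stated rate condition. Your collapsed form of $[S(\boldsymbol{g}^\prime-\E\boldsymbol{g}^\prime)]_i$ and the resulting covariance blocks $M_{11}$, $M_{12}$, $M_{22}$ agree with the paper's argument, so this is the same proof in substance.
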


\begin{remark}
 If we change the first $k$ elements of $(\widehat{\boldsymbol{\theta}}_{in}-\boldsymbol{\theta}^*)$ to an arbitrarily fixed $k$ elements with the subscript set $\{i_1, \ldots, i_k \}$,
Theorem \ref{Thm-normality} still holds. This is because all steps in the proof are valid if we change the first $k$ subscript set from $\{1, \ldots, k\}$ to $\{i_1, \ldots, i_k \}$.
\end{remark}

\section{Comparison to edge differential privacy}
\label{section-comparison}

The theoretical results in Section \ref{section-theoretical properties} confirm that asymptotically valid inference is feasible under the local privacy model. This naturally leads to a subsequent, pragmatic inquiry: how does the local DP approach compare with the more established centralized DP paradigm in terms of data utility and estimation efficiency?
While these frameworks are designed for different trust scenarios, a systematic comparison is crucial for understanding the practical cost of the enhanced privacy offered by LDP and the value of the richer data it releases.

In this section, we juxtapose our edge-LDP estimator ($\widehat{\boldsymbol{\theta}}_{in}$) with its centralized edge-DP counterparts--specifically, the estimators obtained via the Laplace mechanism with ($\widehat{\boldsymbol{\theta}}_{de-Lap}$) and without ($\widehat{\boldsymbol{\theta}}_{Lap}$) denoising.
The goal is not to declare a universal winner, but to quantitatively characterize the trade-off between privacy strength, estimation accuracy, and information richness.
We demonstrate that the ability to release an entire synthetic network under LDP, which supports diverse downstream tasks, comes at the cost of a larger asymptotic variance compared to the centralized DP mechanism that releases only a denoised summary statistic. This comparison provides a clear framework for practitioners to choose the appropriate privacy technology based on their specific requirements for privacy, utility, and data functionality.

We first introduce the nonprivate estimator in \cite{Yan:Leng:Zhu:2016} as the baseline  and then show two edge-DP estimators in the centralized setting through the output mechanisms in \cite{Yan:2021}.

Regardless of privacy, let $\boldsymbol{\theta}=(\alpha_1, \ldots, \alpha_n, \beta_1, \ldots, \beta_{n-1})^\top$ and $\beta_n=0$, the likelihood equations are
\begin{equation}\label{eq-mle}
\begin{array}{lcl}
d_i^+  & = & \sum_{j\neq i} \frac{e^{\hat{\alpha}_i+\hat{\beta}_j}}{1+e^{\hat{\alpha}_i+\hat{\beta}_j}}, ~~i=1, \ldots, n, \\
d_j^-  & = & \sum_{i\neq j} \frac{e^{\hat{\alpha}_i+\hat{\beta}_j}}{1+e^{\hat{\alpha}_i+\hat{\beta}_j}}, ~~j=1, \ldots, n-1,
\end{array}
\end{equation}
where $\widehat{\boldsymbol{\theta}}_{mle}=(\hat{\alpha}_{mle_1},\dots,\hat{\alpha}_{mle_n},\hat{\beta}_{mle_1},\dots,\hat{\beta}_{mle_{n-1}})^\top$ is the MLE of $\boldsymbol{\theta}$ and $\hat{\beta}_{mle_n}=0$.
Note that the solution $\widehat{\boldsymbol{\theta}}_{mle}$ to the estimate equation is fitted in the $p_0$ model with the original bi-degree sequence $\boldsymbol{d}$.

In order to protect the information of the edges in the network, as proposed by \cite{Yan:2021}, we use the output perturbation mechanism such as the discrete Laplace mechanism in Lemma \ref{lem-laplace} to release the bi-degree sequence and conduct inferences using the noisy bi-sequence.

Let $\boldsymbol{z}=(\boldsymbol{z}^+,\boldsymbol{z}^-)$ be the released bi-sequence via the discrete Laplace mechanism, then we have $z_i^+=d_i^++e_i^+,z_i^-=d_i^-+e_i^-$, where $e_i^+,e_i^-$ independently follow discrete Laplace with $\lambda=e^{-\epsilon/2}$. Next, we replace the original bi-degree sequence $(\boldsymbol{d}^+,\boldsymbol{d}^-)$ in \eqref{eq-mle} with the noisy bi-sequence $(\boldsymbol{z}^+,\boldsymbol{z}^-)$ and use the following equations
to estimate the  parameter:
\begin{equation}\label{eq-nondenoise}
\begin{array}{lcl}
z_i^+  & = & \sum_{j\neq i} \frac{e^{\alpha_i+\beta_j}}{1+e^{\alpha_i+\beta_j}}, ~~i=1, \ldots, n, \\
z_j^-  & = & \sum_{i\neq j} \frac{e^{\alpha_i+\beta_j}}{1+e^{\alpha_i+\beta_j}}, ~~j=1, \ldots, n-1,
\end{array}
\end{equation}
where $\boldsymbol{z}=(\boldsymbol{z}^+,\boldsymbol{z}^-)$ is the differentially private bi-sequence. Because the discrete Laplace distribution is symmetric with mean zero, the above equations are also the moment equations. The solution $\widehat{\boldsymbol{\theta}}_{Lap}$ to the equations \eqref{eq-nondenoise} is the edge differentially private estimator of $\boldsymbol{\theta}$, according to Lemma \ref{lemma:fg}, where $\widehat{\boldsymbol{\theta}}_{Lap}=(\hat{\alpha}_{Lap_1},\dots,\hat{\alpha}_{Lap_n},\hat{\beta}_{Lap_1},\dots,\hat{\beta}_{Lap_{n-1}})$ and $\hat{\beta}_{Lap_n}=0$.

However, the output $\boldsymbol{z}$ through the Laplace mechanism is generally not the graphical bi-degree sequence. To make $\boldsymbol{z}$ graphical, we need to denoise $\boldsymbol{z}$ like \cite{Yan:2021}. Let $B_n$ be the set of all possible bi-degree sequences of graph $G_n$. It is natural to use the closest point $\widehat{\boldsymbol{d}}_{de} $ lying in $B_n$ as the denoised bi-sequence, with some distance between $\widehat{\boldsymbol{d}}_{de} $ and $\boldsymbol{d}$. We use the $L_1$-distance here, and define the estimator as
\[
\widehat{\boldsymbol{d}}_{de}=arg\min_{\boldsymbol{d}\in B_n}(\|\boldsymbol{z}^+-\boldsymbol{d}^+\|_1+\|\boldsymbol{z}^--\boldsymbol{d}^-\|_1).
\]

Note that both $\boldsymbol{z}=\widehat{\boldsymbol{d}}_{Lap}$ and $\widehat{\boldsymbol{d}}_{de}$ are edge-DP estimators of $\boldsymbol{d}$. We can replace $\boldsymbol{z}$ with $\widehat{\boldsymbol{d}}_{de}$ in the equations in \eqref{eq-nondenoise} to obtain the denoised estimator of the parameter $\boldsymbol{\theta}$; denote the solution as $\widehat{\boldsymbol{\theta}}_{de-Lap}$. By repeatedly using Lemma \ref{lemma:fg}, $\widehat{\boldsymbol{\theta}}_{Lap}$ and $\widehat{\boldsymbol{\theta}}_{de-Lap}$ are both edge-DP estimators.

The consistency and asymptotic normality for the maximum likelihood estimator $\widehat{\boldsymbol{\theta}}_{mle}$ have been established in \cite{Yan:Leng:Zhu:2016} and those of the two edge-DP estimators $\widehat{\boldsymbol{\theta}}_{Lap},\widehat{\boldsymbol{\theta}}_{de-Lap}$ have been established in \cite{Yan:2021}.

In the following, we compare the difference between the nonprivate estimator $\widehat{\boldsymbol{\theta}}_{mle}$, two edge-DP estimators $\widehat{\boldsymbol{\theta}}_{Lap}$, $\widehat{\boldsymbol{\theta}}_{de-Lap}$ and the edge-LDP estimator $\widehat{\boldsymbol{\theta}}_{in}$ based on $p_0$ model.
 Recall that $\widehat{\boldsymbol{\theta}}_{mle}$ is nonprivate estimate using the original bi-degree sequence $\boldsymbol{d}$, $\widehat{\boldsymbol{\theta}}_{Lap}$ and $\widehat{\boldsymbol{\theta}}_{de-Lap}$ are $\epsilon$-edge DP estimates via the output perturbation, and $\widehat{\boldsymbol{\theta}}_{in}$ is $\epsilon$-edge LDP estimate via input perturbation.
$\widehat{\boldsymbol{\theta}}_{Lap}$ is estimated using the bi-sequence $\boldsymbol{z}$ published by the discrete Laplace mechanism, while $\widehat{\boldsymbol{\theta}}_{de-Lap}$ is estimated using the bi-degree sequence $\widehat{\boldsymbol{d}}_{de}$ denoised by $\boldsymbol{z}$.
Note that the above four estimates $\widehat{\boldsymbol{\theta}}_{mle},\widehat{\boldsymbol{\theta}}_{Lap},\widehat{\boldsymbol{\theta}}_{de-Lap},\widehat{\boldsymbol{\theta}}_{in}$ all have consistency and asymptotic normality.

For $\widehat{\boldsymbol{\theta}}_{in}$, we use $V$ to denote the Jacobian matrix of $F(\boldsymbol{\theta})$ in \eqref{eq:F-DP}.
For clarity, in view of \eqref{eq-mle} and \eqref{eq-nondenoise}, for $\widehat{\boldsymbol{\theta}}_{mle},\widehat{\boldsymbol{\theta}}_{Lap}$ and $\widehat{\boldsymbol{\theta}}_{de-Lap}$, we represent the corresponding Jacobian matrix with $\widetilde{V}$,
where
\[
v_{i,j}=\frac{ (2p-1)e^{\alpha_i + \beta_j} }{ (1 + e^{\alpha_i + \beta_j})^2 }=(2p-1)\tilde{v}_{i,j} ~, 1\le i\neq j \le n \ ;\ v_{i,i}=(2p-1)\tilde{v}_{i,i} ~,1\leq i\leq 2n.
\]

We compare how these methods differ in terms of published data and statistical inference in Table \ref{table-all theta.hat}.
We denote $s_n^2=Var(\sum_{i=1}^{n}e_i^{+}-\sum_{i=1}^{n-1}e_i^{-})=(2n-1)\frac{2e^{-\epsilon_n/2}}{(1-e^{-\epsilon_n/2})^2}$, where $e_i^+,e_i^-$ independently follow discrete Laplace with $\lambda=e^{-\epsilon/2}$  and let $R_{p}=O_p\left(\frac{(\log n)^{1/2}e^{6\|\boldsymbol{\theta}^*\|_{\infty}}}{n^{1/2}}\right)$.

\begin{table}[h]\centering
\caption{Comparison of estimators under different private mechanisms in the $p_0$ model}
\label{table-all theta.hat}
\scriptsize
\begin{tabular}{ccccc}
\hline
& non-private & output perturbation & output perturbation & input perturbation \\
& MLE        & non-denoised Laplace         &  denoised Laplace        & edge-flipping      \\
\hline
\rule{0pt}{12pt} released data & $\boldsymbol{d}$&
$\widehat{\boldsymbol{d}}_{Lap}=\boldsymbol{z}$&
$\widehat{\boldsymbol{d}}_{de}$&
$A^\prime,\widehat{\boldsymbol{d}}_{in}=\boldsymbol{d}^\prime$ \\
\rule{0pt}{12pt} notation & $\widehat{\boldsymbol{\theta}}_{mle}$ &$\widehat{\boldsymbol{\theta}}_{Lap}$
& $\widehat{\boldsymbol{\theta}}_{de-Lap}$ & $\widehat{\boldsymbol{\theta}}_{in}$ \\

\rule{0pt}{15pt} convergence rate
& $e^{2\|\boldsymbol{\theta}^*\|_{\infty}}\cdot R_p$
& $\frac{1}{\epsilon_n}\cdot R_p$
& $R_p$                & $\frac{1}{\epsilon_n}\cdot R_p$                   \\
\rule{0pt}{15pt} asymptotic variance
&  $\frac{1}{\tilde{v}_{i,i}}+\frac{1}{\tilde{v}_{2n,2n}}$
&  $\frac{1}{\tilde{v}_{i,i}}+\frac{1}{\tilde{v}_{2n,2n}}+\frac{s_n^2}{\tilde{v}_{2n,2n}^2}$
&  $\frac{1}{\tilde{v}_{i,i}}+\frac{1}{\tilde{v}_{2n,2n}}$
&  $\frac{\sigma_i^2}{{v}_{i,i}^2}+\frac{\sigma_{2n}^2}{{v}_{2n,2n}^2}$                  \\
\hline
\end{tabular}
\end{table}
From Table \ref{table-all theta.hat}, we can see that in terms of releasing data, under edge-LDP, the input perturbation mechanism called the edge-flipping mechanism can publish more information, which releases a complete synthetic network, not just a sequence of degrees.
Note that $\frac{\sigma_i^2}{{v}_{i,i}^2}+\frac{\sigma_{2n}^2}{{v}_{2n,2n}^2}>\frac{1}{\tilde{v}_{i,i}}+\frac{1}{\tilde{v}_{2n,2n}}$, which means that the variance of $\widehat{\boldsymbol{\theta}}_{in}$ is bigger than that of $\widehat{\boldsymbol{\theta}}_{de-Lap}$.
From Figure \ref{fig-var-comp},
except for $(\alpha_1,\beta_1)$, the variance of $\widehat{\boldsymbol{\theta}}_{in}$ is larger than that of $\widehat{\boldsymbol{\theta}}_{de-Lap}$ but smaller than that of $\widehat{\boldsymbol{\theta}}_{Lap}$, which is consistent with the theoretical conclusion. The large variance of $(\alpha_1,\beta_1)$ might be due to the large $\|\boldsymbol{\theta}^*\|_{\infty}$ in this case.
To some extent, compared with the output perturbation mechanism for releasing degree sequence, the edge-flipping mechanism adds relatively greater noise, which means that accuracy is somewhat sacrificed.
However, the estimation using the edge-flipping bi-degree sequence still maintains consistency and asymptotic normality when the error is acceptable.

\begin{figure}[!h]
\centering
\includegraphics[ height=3in, width=4in, angle=0]
{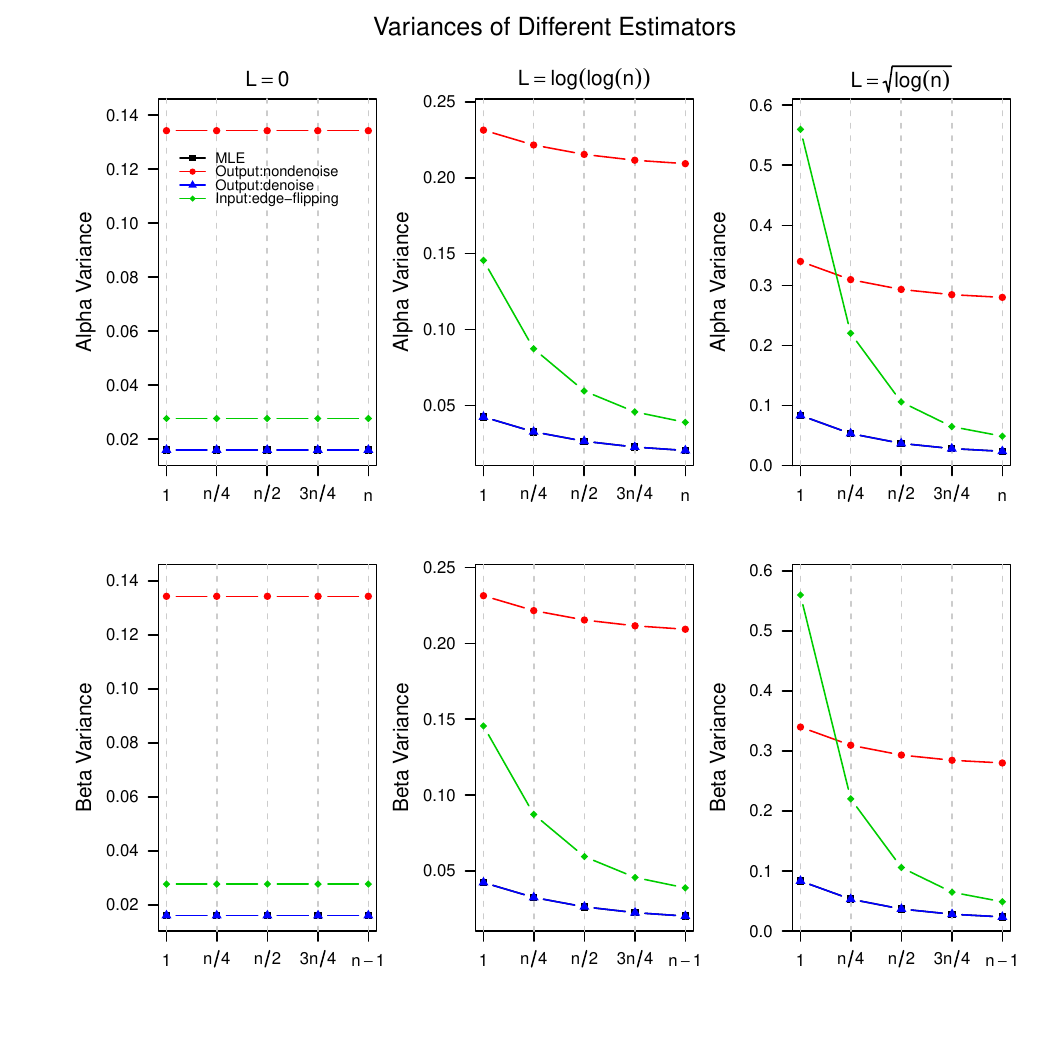}

\caption{Variance comparison of the four estimators in the $p_0$ model }
\label{fig-var-comp}
\end{figure}

Based on the comprehensive comparison presented in Table \ref{table-all theta.hat} and Figure \ref{fig-var-comp}, we can draw the following conclusive insights regarding the trade-offs between different private mechanisms in the $p_0$ model estimation:

The choice between centralized and local differential privacy frameworks presents a fundamental trade-off between statistical efficiency and data utility. While the denoised Laplace mechanism under centralized DP achieves estimation accuracy comparable to the non-private MLE by releasing a denoised bi-degree sequence, the edge-flipping mechanism under local DP, despite its larger asymptotic variance, enables the release of an entire synthetic network. This capacity to support diverse downstream analytical tasks beyond parameter estimation justifies the marginal loss in estimation precision, positioning local DP not merely as a solution for decentralized scenarios where centralized DP is inapplicable, but as a valuable alternative when richer data publication is required, even at the cost of slightly degraded statistical efficiency.

\section{Numerical studies}
\label{section-numerical}
In this part, we conducted simulation studies and one real data analysis to assess and compare the performance of the estimators obtained by private output and input perturbation mechanisms.
Note that $\widehat{\boldsymbol{\theta}}_{Lap}$ and $\widehat{\boldsymbol{\theta}}_{de-Lap}$ are both edge-DP estimators,
where $\widehat{\boldsymbol{\theta}}_{Lap}$ denotes the edge-DP estimator corresponding to the noisy bi-sequence $\widehat{\boldsymbol{d}}_{Lap}=\boldsymbol{z}$ released by the discrete Laplace mechanism, and $\widehat{\boldsymbol{\theta}}_{de-Lap}$ denotes the edge-DP estimator corresponding to the denoised bi-degree sequence $\widehat{\boldsymbol{d}}_{de}$.
Moreover, $\widehat{\boldsymbol{\theta}}_{in}$ denotes the edge-LDP estimator corresponding to the noisy bi-degree sequence $\widehat{\boldsymbol{d}}_{in}=\boldsymbol{d}^\prime$ released by the edge-flipping mechanism.
We assess the performance of the estimators in finite sizes of networks when
$n$, $\epsilon_n$ or the range of $\theta_i$ vary
and compare the numerical results of the edge-LDP estimator $\widehat{\boldsymbol{\theta}}_{in}$ by input perturbation mechanism with those of $\widehat{\boldsymbol{\theta}}_{Lap}$ and $\widehat{\boldsymbol{\theta}}_{de-Lap}$ by output perturbation mechanisms.

\subsection{Simulation studies}
\label{subsection-simulation}
In this section,
we evaluate the asymptotic results for model \eqref{eq:F-DP} by using the edge-flipping mechanism in Algorithm \ref{algorithm:a} and compare different private parameter estimators of $p_0$ model through numerical simulations.

The parameters in the simulations are as follows.
Similar to \cite{Yan:Leng:Zhu:2016}, the setting of the parameter $\boldsymbol{\theta}^*$ takes a linear form.
Specifically, we set $\alpha_{i+1}^* = (n-1-i)L/(n-1)$ for $i=0, \ldots, n-1$.
For the parameter values of $\boldsymbol{\beta}$, let $\beta_i^*=\alpha_i^*$, $i=1, \ldots, n-1$ for simplicity and $\beta_n^*=0$ by default.
We considered four different values for $L$, $L=0$, $\log(\log n)$, $(\log n)^{1/2}$ and $\log n$, respectively.
We simulated three different values for $\epsilon_n$: one is fixed ($\epsilon_n=2$) and the other two values tend to zero with $n$, i.e., $\epsilon_n=\log (n)/n^{1/4}, \log(n)/n^{1/2}$.
We considered three values for $n$, $n=100, 200$ and $500$.
Each simulation was repeated $10,00$ times.

By Theorem \ref{Thm-normality}, $\hat{\xi}_{i,j} = [\hat{\alpha}_i-\hat{\alpha}_j-(\alpha_i^*-\alpha_j^*)]/(\hat{\sigma}_i^2/\hat{v}_{i,i}^2+\hat{\sigma}_j^2/\hat{v}_{j,j}^2)^{1/2}$, $\hat{\zeta}_{i,j} = (\hat{\alpha}_i+\hat{\beta}_j-\alpha_i^*-\beta_j^*)/(\hat{\sigma}_i^2/\hat{v}_{i,i}^2+\hat{\sigma}_{n+j}^2/\hat{v}_{n+j,n+j}^2)^{1/2}$, and $\hat{\eta}_{i,j} = [\hat{\beta}_i-\hat{\beta}_j-(\beta_i^*-\beta_j^*)]/(\hat{\sigma}_{n+i}^2/\hat{v}_{n+i,n+i}^2+\hat{\sigma}_{n+j}^2/\hat{v}_{n+j,n+j}^2)^{1/2}$
converge in distribution to the standard normal distributions, where $\hat{v}_{i,i}$ is the estimate of $v_{i,i}$ by replacing $\boldsymbol{\theta}^*$ with $\widehat{\boldsymbol{\theta}}_{in}$.  Therefore, we assess the asymptotic normality of $\hat{\xi}_{i,j}$, $\hat{\zeta}_{i,j}$ and $\hat{\eta}_{i,j}$ using the quantile-quantile (QQ) plot.
The results for $\hat{\xi}_{i,j}$, $\hat{\zeta}_{i,j}$ and $\hat{\eta}_{i,j}$ are similar, thus only the results of $\hat{\xi}_{i,j}$ are reported.
By \cite{Yan:2021},
$\bar{\xi}_{i,j} = [\hat{\alpha}_{Lap,i}-\hat{\alpha}_{Lap,j}-(\alpha_{i}^*-\alpha_{j}^*)]/(1/\hat{\tilde{v}}_{i,i}^2+1/\hat{\tilde{v}}_{j,j}^2)^{1/2}$
and $\bar{\bar{\xi}}_{i,j} = [\hat{\alpha}_{de,i}-\hat{\alpha}_{de,j}-(\alpha_{i}^*-\alpha_{j}^*)]/(1/\hat{\tilde{v}}_{i,i}^2+1/\hat{\tilde{v}}_{j,j}^2)^{1/2}$also converge in distribution to the standard normal distribution, where $\hat{\tilde{v}}_{i,i}$ is the plug-in estimate of $\tilde{v}_{i,i}$.
We also draw the QQ-plots for 
$\bar{\xi}_{i,j}$ and $\bar{\bar{\xi}}_{i,j}$. Further, we compare $\hat{\xi}_{i,j}$ with 
$\bar{\xi}_{i,j}$ and $\bar{\bar{\xi}}_{i,j}$ to assess the performance of the estimators obtained by different perturbation mechanisms.
The distances between the original bi-degree sequence $\boldsymbol{d}$ and the noisy bi-sequence $\widehat{\boldsymbol{d}}_{Lap}$, the denoised bi-degree sequence $\widehat{\boldsymbol{d}}_{de}$, and the flipped bi-degree sequence $\widehat{\boldsymbol{d}}_{in}$ are also reported and represented as $\|\boldsymbol{d} - \widehat{\boldsymbol{d}}_{Lap}\|_\infty$, $\|\boldsymbol{d} - \widehat{\boldsymbol{d}}_{de}\|_\infty$ and $\|\boldsymbol{d} - \widehat{\boldsymbol{d}}_{in}\|_\infty$ respectively.

The average values of the $\ell_\infty$-distance between the original sequence and the released sequences are reported in Table \ref{tab-distance}.
We can see that the distance becomes larger as $\epsilon_n$ decreases. It means that smaller $\epsilon_n$ provides more privacy protection.
For example, when $\epsilon_n$ changes from $\log n/n^{1/4}$ to $\log n/n^{1/2}$, $\|\boldsymbol{d} - \widehat{\boldsymbol{d}}_{Lap}\|_\infty$ dramatically increases from $8$ to $26$ in the case $n=100$.
As expected,  the distance also becomes larger as $n$ increases when $\epsilon_n$ is fixed. As is understood, stronger privacy protection and more information released for LDP mean more noise is added, and thus its error will be relatively larger.

\begin{table}[!h]
\centering
\scriptsize
\caption{$\ell_\infty$-distances between original and released bi-degree sequences
under different privacy budgets $\epsilon_n$}

\label{tab-distance}
\begin{tabular}{cccccccccccc}
\hline
 & \multicolumn{11}{c}{$\epsilon_n$} \\
\cline{2-12}
 & \multicolumn{3}{c}{$2$} & &\multicolumn{3}{c}{$\log n/n^{1/4}$} &
&\multicolumn{3}{c}{$\log n/n^{1/2}$}  \\
\cline{2-4} \cline{6-8} \cline{10-12} 
\addlinespace
 $n$&$\hat{\boldsymbol{d}}_{Lap}$
 &$\hat{\boldsymbol{d}}_{de}$
 &$\hat{\boldsymbol{d}}_{in}$ &
&$\hat{\boldsymbol{d}}_{Lap}$
&$\hat{\boldsymbol{d}}_{de}$
&$\hat{\boldsymbol{d}}_{in}$ &
&$\hat{\boldsymbol{d}}_{Lap}$
&$\hat{\boldsymbol{d}}_{de}$
&$\hat{\boldsymbol{d}}_{in}$\\
$100$ & 5.7&11.0 &16.9   && 8.0&15.0&23.2 &  & 25.5&36.5&39.2 \\
$200$ & 6.4&14.8&29.7  && 9.2&21.2&43.1 &
& 35.1&63.1&76.3 \\
$500$ & 7.4&21.1&64.3  && 11.3&32.4&102.5 &
& 53.8&129.0&186.4 \\
\hline
\end{tabular}
\end{table}

In the QQ-plots, the horizontal and vertical axes are the theoretical and empirical quantiles, respectively,
and the straight lines correspond to the reference line $y=x$.
In all the QQ-plots, we show $\hat{\xi}_{i,j}$ with edge-flipping (in green color), $\bar{\xi}_{i,j}$ without denoised process (in black color) and $\bar{\bar{\xi}}_{i,j}$ with denoised process (in red color).
Because the results for QQ-plots are similar for $\epsilon_n=2,\log n/n^{1/4},\log n/n^{1/2};n=100,200,500$, only the QQ-plots for $\epsilon_n=2$ when $n=100,500$ are shown here to save space. The other plots are shown in the Supplementary Material.

When $\epsilon_n=2$, the QQ-plots for $\hat{\xi}_{i,j}$, $\bar{\xi}_{i,j}$ and $\bar{\bar{\xi}}_{i,j}$ under $n=100,500$ are shown in Figure \ref{fig-n1-ep1} and Figure \ref{fig-n3-ep1}.
By comparing the QQ-plots when $\epsilon_n=2$, we first observe that the
empirical quantiles of $\hat{\xi}_{i,j}$, $\bar{\xi}_{i,j}$ and $\bar{\bar{\xi}}_{i,j}$ all agree well with the theoretical ones when $L=0$.
Second, for $\hat{\xi}_{i,j}$, we observe that there are obvious
deviations for pair $(1,2)$ and moderate deviations for pair $(n/2,n/2+1)$ when $L=(\log n)^{1/2}$, while the deviations of $\bar{\bar{\xi}}_{i,j}$ are notable for pair $(n-1,n)$.
We also find the performance of $\hat{\xi}_{i,j}$ is much better than that
of $\bar{\bar{\xi}}_{i,j}$ for pair $(n-1,n)$, while the performance of $\bar{\bar{\xi}}_{i,j}$ is much better than that of $\hat{\xi}_{i,j}$ for pair $(1,2)$.
Third, it is shown that $\bar{\xi}_{i,j}$ performs much better than that of $\bar{\bar{\xi}}_{i,j}$ and $\hat{\xi}_{i,j}$, whose QQ-plots deviate from the diagonal line at both ends.
Understandably, because the edge-flipping mechanism under edge-LDP releases more information about the network, unlike the Laplace mechanism which only publishes the degree sequence, the edge-flipping mechanism adds relatively more noise, which leads to a relatively larger deviation of $\hat{\xi}_{ij}$.
Moreover, the deviation of the QQ-plots from the straight line becomes smaller as $n$ increases.
These phenomena corroborate with the findings in \cite{Yan:2021}.

When $\epsilon_n=\log n/n^{1/4},\log n/n^{1/2}$, the QQ-plots are displayed in the supplementary Material.
We observe that under $\epsilon_n=\log n/n^{1/2}$ and $L=(\log n)^{1/2}$, the estimate without denoise and the estimate with denoise did not exist in all repetitions, thus only the corresponding QQ-plot of the estimate via the edge-flipping mechanism could be shown.
In this case, the condition in Theorem \ref{Thm-normality} fails and these figures show significant deviations from the standard normal distribution.
It indicates that $\epsilon_n$ should not go to zero quickly as $n$ increases in order to guarantee good utility.
Lastly, we observe that when $L=\log n$  for which the condition in Theorem 2 of \cite{Yan:2021} fails, the nondenoised and denoised estimates did not exist in all repetitions.
Thus the corresponding QQ-plot could not be shown.

\begin{figure}[!htbp]
\centering
\includegraphics[ height=3in, width=4in, angle=0]{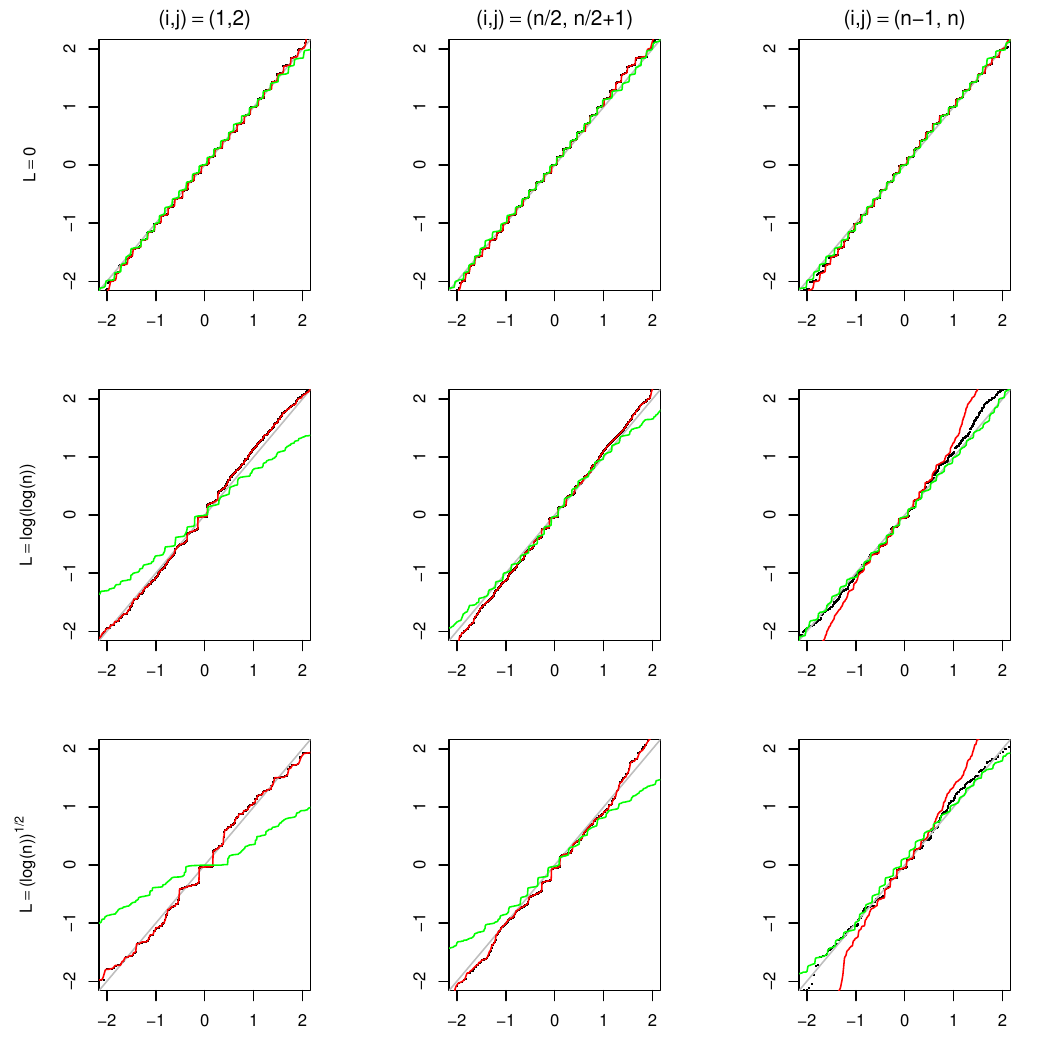}
\captionsetup{justification=centering}
\caption{QQ plots for $\xi_{i,j}$ under $\epsilon_n=2$ and $n=100$, comparing edge-flipping (green), non-denoised Laplace (black), and denoised Laplace (red) mechanisms}
\label{fig-n1-ep1}
\end{figure}

\begin{figure}[!htbp]
\centering
\captionsetup{justification=centering}
\includegraphics[ height=3in, width=4in, angle=0]{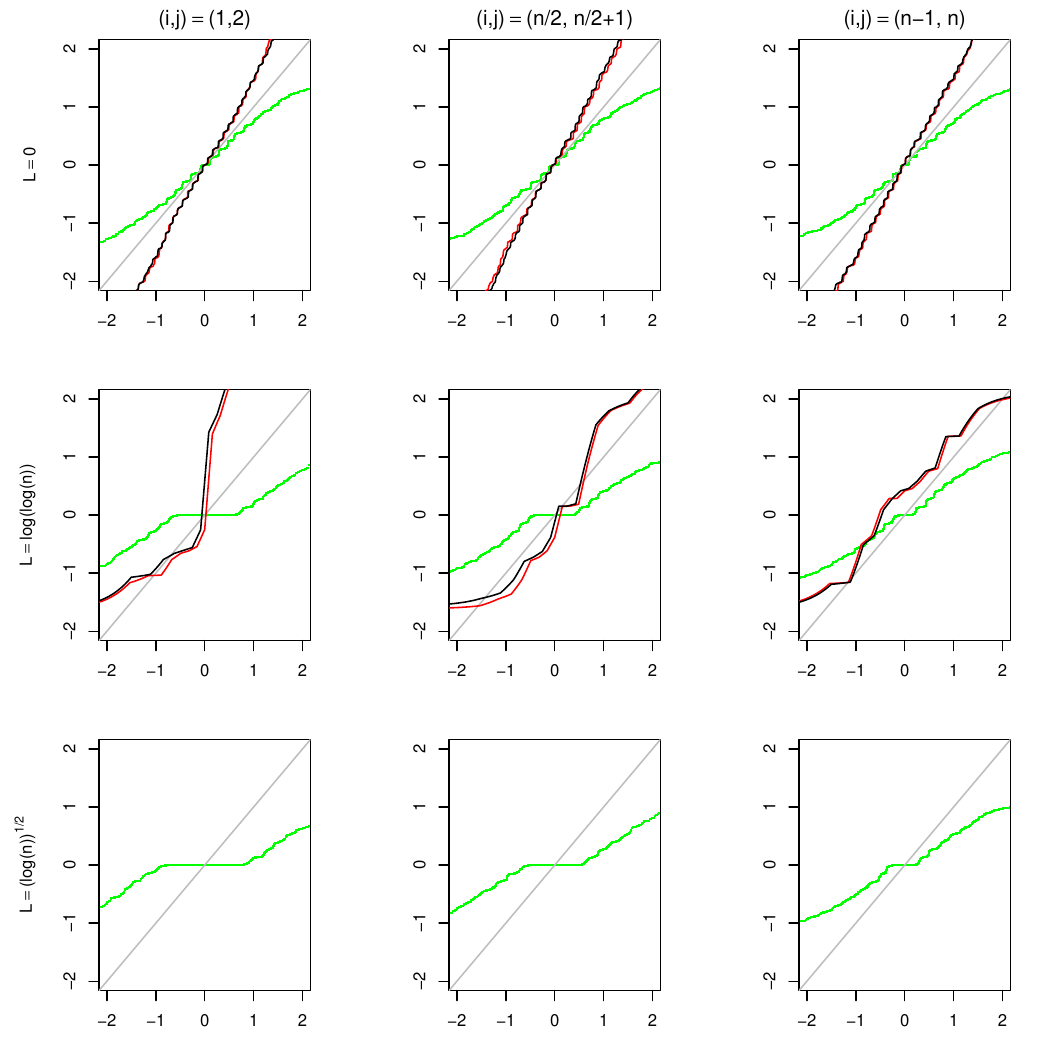}
\caption{QQ plots for $\xi_{i,j}$ under $\epsilon_n=2$ and $n=500$, comparing edge-flipping (green), non-denoised Laplace (black), and denoised Laplace (red) mechanisms}
\label{fig-n3-ep1}
\end{figure}

\newpage
\subsection{Real-data analysis}
\label{subsection-real data}

We evaluate how close the estimators $\boldsymbol{\hat{\theta}}_{Lap}=(\boldsymbol{\hat{\alpha}}_{Lap}, \boldsymbol{\hat{\beta}}_{Lap})$, $\boldsymbol{\hat{\theta}}_{de}=(\boldsymbol{\hat{\alpha}}_{de}, \boldsymbol{\hat{\beta}}_{de})$ and $\boldsymbol{\hat{\theta}}_{in}=(\boldsymbol{\hat{\alpha}}_{in}, \boldsymbol{\hat{\beta}}_{in})$ are to the MLE $\boldsymbol{\hat{\theta}}_{mle}=(\boldsymbol{\hat{\alpha}}_{mle}, \boldsymbol{\hat{\beta}}_{mle})$, fitted in the $p_0$ model with the original bi-degree sequence using one real network dataset: the UC Irvine messages data.
We present the analytical results of the UC Irvine messages data.
Recall that $(\boldsymbol{\hat{\alpha}}_{Lap}, \boldsymbol{\hat{\beta}}_{Lap})$ use the bi-sequence $\boldsymbol{\hat{d}}_{Lap}$ released by the discrete Laplace mechanism , $(\boldsymbol{\hat{\alpha}}_{de}, \boldsymbol{\hat{\beta}}_{de})$ use the denoised bi-sequence  $\boldsymbol{\hat{d}}_{de}$ and $(\boldsymbol{\hat{\alpha}}_{in}, \boldsymbol{\hat{\beta}}_{in})$ correspond to the bi-degree sequence $\boldsymbol{\hat{d}}_{in}=\boldsymbol{d}^\prime$ released by the edge-flipping mechanism.
Note that $(\boldsymbol{\hat{\alpha}}_{Lap}, \boldsymbol{\hat{\beta}}_{Lap})$ and $(\boldsymbol{\hat{\alpha}}_{de}, \boldsymbol{\hat{\beta}}_{de})$ by the output perturbation mechanisms are the edge-DP estimators of the vector parameters $\boldsymbol{\alpha}$ and $\boldsymbol{\beta}$, while $(\boldsymbol{\hat{\alpha}}_{in}, \boldsymbol{\hat{\beta}}_{in})$ by input perturbation mechanism is the edge-LDP estimator.
We set $\epsilon_n$ equal to one, two and three, as in \cite{Karwa:Slakovic:2016}, and 
repeat $1,000$ times for each $\epsilon_n$.
Then, we computed the average and the variance of $\|\boldsymbol{\hat{\alpha}}-\boldsymbol{\hat{\alpha}}_{mle}\|_{\infty},\|\boldsymbol{\hat{\beta}}-\boldsymbol{\hat{\beta}}_{mle}\|_{\infty}$ for three estimators $(\boldsymbol{\hat{\alpha}}_{Lap}, \boldsymbol{\hat{\beta}}_{Lap})$, $(\boldsymbol{\hat{\alpha}}_{de}, \boldsymbol{\hat{\beta}}_{de})$ and $(\boldsymbol{\hat{\alpha}}_{in}, \boldsymbol{\hat{\beta}}_{in})$.

The UC Irvine messages network data were created from an online community of students at the University of California, Irvine [\cite{Opsahl:2009}]. This network dataset covers the period from April to October 2004.
It has a total of $1,899$ nodes, and each node represents a student. During the observation period, students sent a total number of $59,835$ online messages.
A directed edge is established from one
student to another if one or more messages have been sent from
the former to the latter [\cite{Panzarasa:2009}].
In total, there are $20,296$ edges, and the edge density is $0.56\%$, indicating a very sparse network, and an average degree of $10.69$.
Of the $1,899$ nodes, $586$ nodes have no out-edges or in-edges.
Following \cite{Yan:2021},
we remove these nodes, because a nonprivate MLE does not exist in this case.
To guarantee nonzero out-degrees and in-degrees after adding noise with a large probability,
we analyze a subgraph with out-degrees and in-degrees both larger than five.
After data preprocessing, only $696$ nodes remain.
The quantiles of $0$, $1/4$, $1/2$, $3/4$ and $1$ are  $3$, $8$, $14$, $26$, and $164$ for the out-degrees, and $4$, $10$, $16$, $27$ and $121$ for the in-degrees, respectively.

When many nodes have few links to others,  large noise easily causes output with nonpositive elements in the discrete Laplace mechanism.
When $\epsilon=1$, the average $\ell_\infty$-distance between $\boldsymbol{d}$ and $\boldsymbol{\hat{d}}_{Lap}$ is $15.6$, and all private estimates fail to exist as in \cite{Yan:2021}. Therefore, we set $\epsilon=\log n/n^{1/4}$ ($\approx 1.27$), $\epsilon=2$ and $\epsilon=3$.
The results are shown in Table \ref{tab-real data}.
From this table, we can see that the mean values 
of $\|\boldsymbol{\hat{\alpha}}-\boldsymbol{\hat{\alpha}}_{mle}\|_{\infty}$ and $\|\boldsymbol{\hat{\beta}}-\boldsymbol{\hat{\beta}}_{mle}\|_{\infty}$, which indicate the estimators are very close to the MLE.
In addition, compared with the two edge-DP estimations $\boldsymbol{\hat{\theta}}_{Lap},\boldsymbol{\hat{\theta}}_{de}$, the error of the edge-LDP estimation $\boldsymbol{\hat{\theta}}_{in}$ is slightly larger because the edge-LDP mechanism releases more information, not just the bi-degree sequence of the graph. We also find that the frequencies for which the edge-LDP estimate fails to exist are significantly lower than those for the two edge-DP estimates. This is probably because the LDP edge-flipping mechanism directly releases a new graph, which ensures that the released bi-degree sequence must be graphical. However, a drawback of output perturbations for releasing is easy to generate zero or negative degrees, which results in a released sequence that may not be graphical.

\begin{table}[htbp]
\centering
\caption{Performance of private estimators on the UC Irvine message network: the average $\ell_\infty$ distance from the MLE with failure frequencies ($\times 100\%$) in parentheses}
\scriptsize
\begin{tabular}{cllccc}
\hline
Perturbation Type & Mechanism & Estimators & $\epsilon=\log n/n^{1/4}$ & $\epsilon=2$ & $\epsilon=3$ \\
\hline
\multirow{4}{*}{Output} & \multirow{2}{*}{non-denoised Laplace} & $\hat{\alpha}_{Lap}$ & 1.94 (99.4) & 1.42 (55.0) & 0.91 (9.6) \\
 & & $\hat{\beta}_{Lap}$ & 1.68 (99.4) & 1.28 (55.0) & 0.80 (9.6) \\
 \cline{2-6}
 & \multirow{2}{*}{denoised Laplace} & $\hat{\alpha}_{de-Lap}$ & 2.24 (99.5) & 1.62 (78.7) & 1.09 (54.7) \\
 & & $\hat{\beta}_{de-Lap}$ & 1.40 (99.5) & 1.22 (78.7) & 0.79 (54.7) \\
\hline
\multirow{2}{*}{Input} & \multirow{2}{*}{edge-flipping} & $\hat{\alpha}_{in}$ & 4.85 (0.0) & 4.64 (0.0) & 5.92 (0.0) \\
 & & $\hat{\beta}_{in}$ & 5.56 (0.0) & 4.91 (0.0) & 5.62 (0.0) \\
\hline
\end{tabular}
\label{tab-real data}
\end{table}

\section{Discussion}
\label{section-discussion}

Under edge-based LDP, we develop the edge-flipping mechanism to release directed networks
and use the released edge-flipping bi-degree sequence to estimate the parameter of $p_0$ model.
We establish the consistency and asymptotic normality of the edge LDP estimator.
Moreover, we compare different estimators obtained by the classical output mechanisms under edge-DP and the input mechanism under edge-LDP in terms of data releasing and statistical inference.
Compared with the output perturbation mechanism for releasing degree sequence, the edge-flipping mechanism adds relatively greater noise, which means that accuracy is somewhat sacrificed.
However, the estimation using the edge-flipping bi-degree sequence still maintains consistency and asymptotic normality when the error is acceptable.

The conditions in Theorems \ref{Thm-consistency} and \ref{Thm-normality} induce an interesting trade-off between the private parameter
and the growing rate of the parameter $\boldsymbol{\theta}$.
If the parameter $\epsilon_n$ is large, $\boldsymbol{\theta}$ can be allowed to be relatively large.
Moreover, the condition in Theorem \ref{Thm-normality} is much stronger than that in Theorem \ref{Thm-consistency}.
The asymptotic behavior of the estimator is not only determined by the growing rate of the parameter $\boldsymbol{\theta}$, but also by the configuration of the parameter. It would be of interest to see whether these conditions could be relaxed.

We only make inference from the noisy data in a simple $p_0$ model.
In order to extend the method of deriving the consistency of the estimator in our paper to other network models,
one needs to establish a geometrical rate of convergence of the Newton iterative sequence.
This is not easy for network models with other network features since it is difficult to derive the upper bound of the matrix norm for the inverse matrix of the Fisher information matrix without some special matrix structures.
At the same time, it is also difficult to extend the method of deriving asymptotic normality of the estimator to network models with other network features
since it is generally difficult to derive the approximate inverse matrix of a general Fisher information matrix.
Therefore, it is of interest to develop new methods and techniques to analyze LDP estimators in other network models.

\setlength{\itemsep}{-1.5pt}
\setlength{\bibsep}{0ex}


\bibliographystyle{apalike} 


\end{document}